\numberwithin{equation}{section}
\newtheorem{thm}{Theorem}[section]
\newtheorem*{thm*}{Theorem}
\newtheorem{lem}[thm]{Lemma}
\newtheorem{prop}[thm]{Proposition}
\newtheorem{cor}[thm]{Corollary}
\theoremstyle{definition}
\newtheorem{defn}[thm]{Definition}
\theoremstyle{remark}
\newtheorem{rem}[thm]{Remark}
\newcommand{\fr}{}
\def\XXint#1#2#3{{\setbox0=\hbox{$#1{#2#3}{\int}$}
		\vcenter{\hbox{$#2#3$}}\kern-.5\wd0}}
\newcommand{\RCD}{{\mathrm {RCD}}}
\newcommand{\XX}{{\mathsf{X}}}
\newcommand{\YY}{{\mathsf{Y}}}
\newcommand{\dist}{{\mathsf{d}}}
\newcommand{\mass}{{\mathsf{m}}}
\newcommand{\LIP}{{\mathrm {LIP}}}
\newcommand{\DIFF}{{\mathrm{D}}}
\newcommand{\RR}{\mathbb{R}}
\newcommand{\HH}{\mathcal{H}}
\newcommand{\LL}{\mathcal{L}}
\newcommand{\DDelta}{{\mathbf{\Delta}}}
\newcommand{\Ric}{{\bf {Ric}}}
\newcommand{\heat}{{\mathrm {h}}}
\newcommand{\hess}{{\mathrm{Hess}}}
\newcommand{\defeq}{\vcentcolon=}
\newcommand{\eqdef}{=\vcentcolon}
\newcommand{\capa}{{\mathrm {Cap}}}
\let\oldchi=\chi
\renewcommand{\chi}{\text{\raisebox{\depth}{\(\oldchi\)}}}
\let\phi\varphi
\let\epsilon\varepsilon
\title[Perelman's entropy  and heat kernel bounds on RCD spaces]{Perelman's entropy  and heat kernel bounds \\on RCD spaces}
\author{Camillo Brena}
\address{Institute for Advanced Study. Einstein Drive 1, 08540 Princeton, New Jersey}
\email{cbrena@ias.edu}
\begin{document}
	\raggedbottom

	\begin{abstract}
    We study Perelman's $\mathcal{W}$-entropy functional on finite-dimensional $\mathrm{RCD}$ spaces, a synthetic generalization of spaces with Bakry--\'{E}mery Ricci curvature bounded from below. We rigorously justify the formula for the time derivative of the $\mathcal{W}$-entropy and derive its monotonicity and rigidity properties. Additionally, we establish bounds for solutions of the heat equation, which are of independent interest. 
	\end{abstract}
		
\maketitle
	\tableofcontents
\section{Introduction}
The focus of this  paper is Perelman's $\mathcal{W}$ entropy on non-smooth $\RCD$ spaces. Perelman's $\mathcal{W}$ entropy functional  was introduced  on Ricci flow in \cite{PerelmanEntropy} and played an important role in the resolution of the Poincaré conjecture. Two years later, in \cite{NiEntropy,NiEntropyAdd},  Ni noticed the similarity between the derivation of the entropy formula in \cite{PerelmanEntropy} and the gradient estimate for the heat equation established by Li and Yau in \cite{LiYauActa}. This led him to the intuition that a similar entropy formula exists for the heat equation. As a result, Li adapted  Perelman's entropy
  to the static case (i.e.\ for solutions of the heat equation).  
 The latter is the functional we focus on, and its definition will be recalled shortly.  $\RCD(K,N)$ spaces are infinitesimally Hilbertian metric measure spaces that, in a synthetic sense, satisfy a lower bound (by $K$) on the Ricci curvature and an upper bound (by $N$) on the dimension, see \cite{Villani2017,AmbICM, gigli2023giorgi} and references therein. Actually,  the bounds $K$ and $N$ should be taken into account simultaneously in the so-called Curvature-Dimension condition (\cite{Lott-Villani09} and  \cite{Sturm06I,Sturm06II}). For example, a smooth weighted Riemannian manifold $(M^n, g, e^{-V}d\mathrm{Vol})$ is $\RCD(K,N)$ if and only if
 \begin{equation}\label{vefrdsacas}
 	\mathrm{Ric}_N\defeq\mathrm{Ric}+\hess(V)-\frac{dV\otimes dV}{N-n}\ge K,
 \end{equation}
 where the fraction has to be understood as $\infty$ if $n>N$, or if $n= N$ and $dV\ne 0$. 
 Among $\RCD$ spaces, a more regular subclass is the one of non-collapsed spaces, \cite{DPG17}. More precisely, non-collapsed  $\RCD$ spaces are those $\RCD(K,n)$ spaces whose reference measure is exactly $\HH^n$, and are characterized by the fact that the (distributional) Laplacian equals the trace of the Hessian, \cite{DPG17,H19,brena2021weakly}. In words, this corresponds to the unweighted case. The class of  $\RCD$ spaces encompasses  Ricci-limit spaces, i.e.\  those spaces arising as  Gromov--Hausdorff limits of  Riemannian manifolds with Ricci curvature uniformly bounded from below, \cite{Cheeger-Colding97I,Cheeger-Colding97II,Cheeger-Colding97III}.

\bigskip

Now we recall the definition of the $\mathcal{W}$ entropy functional and we
highlight two key properties that it satisfies. For an $\RCD(K,N)$ space,  define
\begin{equation}\label{casdcvsavd1}	
	\mathcal{W}_t \defeq \int (t|\nabla f|^2+f-N)\rho_t\dd y,
\end{equation}
where the point $x$ is fixed, $\rho_t(x,y)$ denotes the heat kernel, and 
\begin{equation}\label{casdcvsavd}
	f=f_t(x,y) \defeq -\log \rho_t -\frac{N}{2}\log t-\frac{N}{2}\log(4\pi).
\end{equation} 
For the following statement, which serves as an example of the results of this note, we restrict  ourselves to the case $K\ge 0$.
\begin{thm*} Let us consider an $\RCD(0,N)$ space. Then, the following hold.
	\begin{description}
	\item[\quad Monotonicity] $t\mapsto\mathcal{W}_t$ is non-increasing, more precisely
	\begin{equation}\label{vefdcvasdc}
		\mathcal{W}_{t_1}\ge \mathcal{W}_{t_2}\quad\text{for every $0<t_1<t_2<\infty$}.
	\end{equation}
	\item[\quad Rigidity] If $t\mapsto\mathcal{W}_t$ is not \emph{strictly} decreasing, i.e.\ equality is obtained in \eqref{vefdcvasdc} for some choice $0<t_1<t_2<\infty$, then the space is a cone. 
	\end{description}
\end{thm*}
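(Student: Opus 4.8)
The whole statement is driven by the pointwise time-derivative identity for the $\mathcal{W}$-entropy. Following Perelman's computation, one first rewrites
\begin{equation*}
\mathcal{W}_t=\int v_t\,\dd y,\qquad v_t\defeq\bigl(2t\,\Delta f_t-t|\nabla f_t|^2+f_t-N\bigr)\rho_t,
\end{equation*}
the two expressions coinciding after an integration by parts that uses $\nabla\rho_t=-\rho_t\nabla f_t$. The heat equation for $\rho_t$ translates into $\partial_t f_t=\Delta f_t-|\nabla f_t|^2-\tfrac{N}{2t}$, and substituting this together with the self-improved, $N$-dimensional Bochner inequality on the $\RCD(0,N)$ space yields a conjugate-heat-type differential inequality $(\Delta-\partial_t)v_t\ge 2t\,\mathcal{D}_t\,\rho_t$, where the \emph{defect}
\begin{equation*}
\mathcal{D}_t=\Bigl|\hess f_t-\tfrac{1}{2t}\,g\Bigr|^2+\Ric_N(\nabla f_t,\nabla f_t)+(\text{a further non-negative dimensional term})
\end{equation*}
is non-negative precisely because $K=0$ (Hessian and $\Ric_N$ understood in Gigli's second-order calculus and in the sense of the measure-valued Ricci tensor). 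Integrating, $\tfrac{\diff}{\diff t}\mathcal{W}_t=\int\partial_t v_t\,\dd y=\int\Delta v_t\,\dd y-\int(\Delta-\partial_t)v_t\,\dd y\le-2t\int\mathcal{D}_t\,\rho_t\,\dd y\le 0$, the term $\int\Delta v_t\,\dd y$ vanishing thanks to the heat-kernel bounds (this is where they enter). Since the identity holds in the integrated, absolutely continuous sense, $\mathcal{W}_{t_1}-\mathcal{W}_{t_2}\ge\int_{t_1}^{t_2}2t\bigl(\int\mathcal{D}_t\,\rho_t\,\dd y\bigr)\diff t\ge 0$, which is the monotonicity \eqref{vefdcvasdc}.

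For the rigidity part, suppose equality holds in \eqref{vefdcvasdc} for some $0<t_1<t_2<\infty$. Being non-increasing with equal endpoints, $t\mapsto\mathcal{W}_t$ is then constant on $[t_1,t_2]$, so $\tfrac{\diff}{\diff t}\mathcal{W}_t=0$ there; by the derivative identity, $\int\mathcal{D}_t\,\rho_t\,\dd y=0$ for a.e.\ $t\in(t_1,t_2)$. Because $\mathcal{D}_t\ge 0$ and the heat kernel is strictly positive on the connected $\RCD$ space, this forces $\mathcal{D}_t=0$ $\mass$-almost everywhere for a.e.\ such $t$. Fixing one such time $t_\ast\in(t_1,t_2)$, the vanishing of $\mathcal{D}_{t_\ast}$ gives, $\mass$-a.e.,
\begin{equation*}
\hess f_{t_\ast}=\tfrac{1}{2t_\ast}\,g\qquad\text{and}\qquad\Ric_N(\nabla f_{t_\ast},\nabla f_{t_\ast})=0.
\end{equation*}

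It remains to deduce a cone structure from the Hessian equation. Set $\psi\defeq 2t_\ast f_{t_\ast}(x,\cdot)$, a continuous function with $\hess\psi=g$ $\mass$-a.e. The Gaussian upper bound for $\rho_{t_\ast}$ proved here gives $\psi(y)\gtrsim\dist(x,y)^2$ as $\dist(x,y)\to\infty$, so $\psi$ is proper and bounded below; replacing $\psi$ by $\psi-\min\psi$ (which leaves $\hess\psi=g$ unchanged) and using $\nabla|\nabla\psi|^2=2\hess\psi(\nabla\psi)=2\nabla\psi$ together with $\nabla\psi=0$ at a minimum point $x_0$, one obtains $|\nabla\psi|^2=2\psi$. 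Hence $r\defeq\sqrt{2\psi}$ satisfies the eikonal equation $|\nabla r|=1$ off $x_0$ and, by the Hessian chain rule, $\hess r=\tfrac{1}{r}(g-\diff r\otimes\diff r)$, so that $r$ is $1$-Lipschitz, convex, and vanishes only at $x_0$; this identifies $r=\dist(x_0,\cdot)$ and yields $\hess\bigl(\tfrac12\dist(x_0,\cdot)^2\bigr)=g$. By the $\RCD$ counterpart of the classical ``Hessian of the squared distance equals the identity $\Rightarrow$ metric cone'' rigidity (the nonsmooth ``volume cone implies metric cone'' circle of results of De~Philippis--Gigli, Gigli--Violo, and Ketterer), $(\XX,\dist)$ is a metric cone with vertex $x_0$; the relation $\Ric_N(\nabla f_{t_\ast},\nabla f_{t_\ast})=0$ (with the dimensional term) is what forces $\mass$ to be the corresponding cone measure, so the space is a cone in the full metric-measure sense. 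One checks a posteriori that $x_0$ may be taken to be the fixed basepoint $x$.

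The genuine difficulty lies in the derivative identity of the first paragraph: proving that $t\mapsto\mathcal{W}_t$ is locally absolutely continuous and differentiable with the stated expression, and that the integration by parts producing $\int\Delta v_t\,\dd y=0$ is legitimate, with no boundary contribution escaping to spatial infinity. This is exactly where sharp heat-kernel estimates, together with the $W^{2,2}$- and Bochner-regularity of $\log\rho_t$, are indispensable, and it is the technical heart of the note. Within the rigidity argument the delicate points are secondary but real: justifying the calculus with $\hess\psi=g$ in the non-smooth setting (chain rules for Hessians of unbounded functions, the step $|\nabla\psi|^2=2\psi$, and the passage from the eikonal equation to the distance function), and invoking the $\RCD$ cone-rigidity theorem in a form that also covers the weighted case, where the $\Ric_N$-term must be used to upgrade ``metric cone'' to ``metric measure cone.''
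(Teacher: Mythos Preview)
Your monotonicity argument is essentially the paper's: the paper makes the integration by parts rigorous by first multiplying by a good cut-off $\varphi$, computing $\partial_t\mathcal{W}_{t,\varphi}$ exactly, and then letting $\varphi\nearrow 1$ via dominated/monotone convergence together with the heat-kernel bounds---this is precisely the device that justifies your ``$\int\Delta v_t\,\dd y=0$'' step, and you correctly flag it as the technical heart.

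Your rigidity argument, however, takes a genuinely different route. You work at a \emph{single} time $t_*$ and try to identify $\psi=2t_* f_{t_*}(x,\cdot)$ directly with a squared distance via $\hess\psi=g$, then invoke a Hessian-type cone rigidity. The paper never attempts this identification at a fixed time: from $\Delta h_1=N/2$ (where $h_t=tf_t$) it uses an \emph{analyticity} argument in $t$ to propagate $\Delta h_t=N/2$ to \emph{every} $t>0$, then sends $t\searrow 0$ and applies the Varadhan short-time asymptotic $t\log\rho_t\to -\dist^2(x,\cdot)/4$ to obtain $\Delta\dist^2(x,\cdot)=2N$ directly at the heat-kernel pole $x$. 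A coarea computation then gives $\mass(B_r(x))=cr^N$, and the De~Philippis--Gigli volume-cone-to-metric-cone theorem finishes. This route sidesteps the two delicate points you yourself flag: passing from $|\nabla r|=1$ plus convexity to $r=\dist(x_0,\cdot)$ in the non-smooth setting, and the a~posteriori identification $x_0=x$---neither of which you actually carry out. Your strategy could plausibly be completed by appealing to Obata-type rigidity for functions with $\hess\psi=g$ on $\RCD$ spaces, but as written the identification of $\sqrt{2\psi}$ with a distance function and of its minimum with the basepoint $x$ remain genuine gaps, whereas the paper's analyticity-plus-Varadhan trick makes both automatic and lands directly on the Laplacian-comparison equality needed to feed into \cite{DPG16}.
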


It is needless to point out to the reader the importance that monotone and rigid quantities play in geometric analysis. However, we discuss one example, as it is the main motivation of our work. The example under consideration is the main result of \cite{CJNrect}, where the singular set of non-collapsed Ricci limit spaces is analyzed, proving its rectifiability along with sharp Minkowski estimates. 
In \cite[Section 6]{CJNrect}, an important part is played by the sharp (quantitative) cone-splitting  principle, in which is proved the existence of a splitting map satisfying  quantitative estimates depending upon the pinching of the $\mathcal{W}$ entropy. The proof of this principle follows from the fact that, for a smooth Riemannian manifold $(M^n,g)$, 
		\begin{equation}\label{vfedcdasc}
	\begin{split}
		\partial_t \mathcal{W}_{t}(x)&= -2t \int_\XX\bigg(\Big| \hess f-\frac{1}{2t}g\Big|^2+\mathrm{Ric}(\nabla f,\nabla f)\bigg)\rho_t\dd y.
	\end{split}
\end{equation}
Hence, \eqref{vfedcdasc} makes apparent that small pinching of the entropy (which is obtained when the manifold is close to be conical, cf.\ the \textbf{Rigidity} statement above), implies the existence of a function (actually, the logarithm of the heat kernel) whose Hessian is close to be a multiple of the metric, in a quantitative sense. This is the starting observation for the proofs of the results of  \cite[Section 6]{CJNrect}.

In order to analyze singular sets of Ricci-limit space, it is enough to have \eqref{vfedcdasc} for smooth Riemannian manifolds, and this is by now a well-known formula, \cite{NiEntropy,NiEntropyAdd}. However, if one tries to extend the argument of \cite{CJNrect} to non-collapsed $\RCD$ spaces, a smooth approximating sequence is not available, hence, it seems necessary to obtain the analogue of \eqref{vfedcdasc} directly in the non-smooth realm. This is indeed the result (see Corollary \ref{bvrfdca}) that motivated the present paper, with the future goal of generalizing the results of \cite{CJNrect}  to non-collapsed $\RCD$ spaces in \cite{cbprep}. To be more precise, \cite{CJNrect} uses a modified entropy, where the integrand is multiplied by a compactly supported cut-off function. From a technical perspective, the derivative of this modified functional can also be more easily computed on non-smooth spaces.
 
It is worth mentioning that monotonicity and rigidity of the entropy on $\RCD(0,N)$ spaces are already known from \cite{KuwadaLi}, see also \cite{JiangZhang16} for the compact case. However, no proof of the analogue of \eqref{vfedcdasc} is obtained, and, even worse, it seems that there is no clear way to adapt the proof of \cite{KuwadaLi} to obtain \eqref{vfedcdasc}. In \cite{KuwadaLi},  the authors mention 
\begin{itemize}
	\item the lack of smoothness of $\RCD$ spaces, which makes the  second order calculus on these spaces (which is needed to state \eqref{vfedcdasc})  a remarkable achievement,
	\item the fact $\RCD$ spaces do not have ``bounded geometry'', 
\end{itemize} as the main difficulties towards obtaining the analogue of \eqref{vfedcdasc} in our setting. In this note, we overcome these issues, respectively, by
\begin{itemize}
	\item  exploiting heavily   the technical machinery that has been developed  so far on $\RCD$ spaces, 
	\item differentiating in time a modified version of the entropy (in which we multiply the integrand by a cut-off function) and then carefully taking the limit as the cut-off function converges to $1$.
\end{itemize} 
On the other hand, in \cite{KuwadaLi} the authors overcame these issues  by bypassing the computations leading to \eqref{vfedcdasc}, and  exploiting instead the control on the evolution of the Wasserstein distance for measures evolving via heat flow (which can be used to characterize the $\RCD$ condition). To draw a parallel, \cite{KuwadaLi} uses Lagrangian arguments, whereas the present note uses only Eulerian arguments and hence succeeds in obtaining the formula describing the derivative in time of  Perelman's $\mathcal{W}$ entropy.

We refer to \cite{KuwadaLi} and the references therein for a list of related works where similar results have been obtained.

\subsection{Bounds for solutions of the heat flow equation}
In \cite[Theorem 4.14]{CJNrect}, the authors recalled some results concerning heat kernel bounds that are used to help with the computations for the entropy. Even though these results are not all strictly necessary, we begin the paper by recalling relevant estimates.

Our first statement concerns bounds for solutions of the heat equation, and the heat kernel estimates just mentioned will follow, see Theorem \ref{hkest}. While most of these estimates are already well-known, we single out these bounds in the following statement for the reader's convenience. 
To discuss further the three bounds, we recall that items $\it(1)$ and $\it(2)$
are gradient bounds in the spirit of \cite{LiYauActa}. In the smooth context, $\it(1)$ is due to \cite{Hamilton93}. For $\RCD$ spaces, such result has been proved  in \cite{JiangZhang16} and we refer to that paper for references. Item $\it(2)$ is the Baudoin--Garofalo inequality, \cite{BaudoinGarofalo}, and  is proved for $\RCD$ spaces in   \cite{Jiang15}, or  \cite{GarofaloMondino14} in the  case of finite mass. Notice that for $K=0$ it reads as the well-known Li--Yau gradient bound. Item $\it (3)$ is obtained by Hamilton \cite{Hamilton93} for  Riemannian manifolds and is generalized in this paper for $\RCD$ spaces. The improvement is in two directions: first, we deal with possibly collapsed spaces  (the weighted case), second, we are able to treat non-smooth spaces.

\begin{thm}\label{grfeac}
	Let $(\XX,\dist,\mass)$ be an $\RCD(K,N)$ space. Let $u\in L^q(\XX)\cap L^\infty(\XX)$ for some $q\in [1,\infty)$ with $u\ge 0$, not vanishing identically. Then the following hold.
	\begin{enumerate}
		\item  For every $t>0$,
		\begin{equation}\notag
			t|\nabla \log h_t u|^2\le (1+2K^- t)\log\Big(\frac{\|u\|_{L^\infty}}{h_tu}\Big)\quad\quad\text{for $\mass$-a.e.\ $x$}.
		\end{equation}
		\item For every $t>0$,
		\begin{equation}\notag
			|\nabla \log h_t u |^2\le e^{2K^-t/3}\frac{\Delta h_t u}{h_t u}+\frac{NK^-}{3}\frac{e^{4K^-t/3}}{e^{2K^-t/3}-1}\quad\text{for  $\mass$-a.e.\ $x$}.
		\end{equation}
		\item For every $t>0$,
		\begin{equation}\label{veafdcs}
			t \Delta h_t u\le e^{K^-t} h_t u\bigg(N+4\log\Big(\frac{\|u\|_{L^\infty}}{h_tu}\Big)\bigg)\quad\text{for  $\mass$-a.e.\ $x$}.
		\end{equation}
	\end{enumerate}
\end{thm}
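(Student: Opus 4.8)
Items $(1)$ and $(2)$ are not new: $(1)$ is Hamilton's gradient bound, established on $\RCD$ spaces in \cite{JiangZhang16}, and $(2)$ is the Baudoin--Garofalo/Li--Yau inequality, proved in \cite{Jiang15} (see also \cite{GarofaloMondino14,BaudoinGarofalo}); so the only point requiring an argument is $(3)$, for which the plan is to carry out the non-smooth counterpart of Hamilton's maximum principle proof \cite{Hamilton93}. First I would make the usual preliminary reductions: by $1$-homogeneity of \eqref{veafdcs} in $u$ I normalise $\|u\|_{L^\infty}=1$, and by a truncation/density argument — replacing $u$ by bounded, compactly supported, locally Lipschitz approximants and using the continuity of the heat flow and of its first and second order spatial derivatives with respect to the initial datum — I reduce to the case in which $f\defeq h_tu$ is, for every $t>0$, strictly positive (by the strong maximum principle, equivalently the Gaussian lower bound on the heat kernel), locally Lipschitz, and regular enough (locally a test function) that the second order calculus on $\RCD$ spaces applies to it. In particular $v\defeq\log(1/f)\ge 0$ is then a locally Lipschitz solution of $\partial_t v=\Delta v-|\nabla v|^2$, and $\Delta f/f=|\nabla v|^2-\Delta v$ has a well-behaved representative.

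The core step is to produce the right differential inequality. I would consider an auxiliary function of Hamilton type, of the form
\[
G\defeq t\,\frac{\Delta f}{f}-e^{K^-t}\bigl(N+4v\bigr)
\]
— possibly corrected by a $t$-dependent multiple of $|\nabla\log f|^2=|\nabla v|^2$, the precise coefficients and exponential-in-time weights being dictated by the computation — and show that $G$ is, in the sense of distributions, a subsolution of a drift--heat equation $\partial_tG-\Delta G+\langle b,\nabla G\rangle\le 0$ for a suitable vector field $b$ proportional to $\nabla v$. Here one uses that $\Delta f/f=\partial_t\log f$ formally solves such a drift--heat equation (obtained by differentiating in $t$ the equation for $\log f$), combined with the \emph{weak, dimensional} Bochner inequality available on $\RCD(K,N)$ spaces — the term $\tfrac1N(\Delta v)^2$ is exactly where the $N$ of \eqref{veafdcs} enters, and the factor $e^{K^-t}$ is tailored to absorb, via Grönwall-type bookkeeping, the curvature term $K|\nabla v|^2$. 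Every integration by parts must be justified through the test-function regularity of $f$ secured above, keeping careful track of which quantities are genuine functions and which are only measures, so that the favourable sign of the singular part of the Bochner measure may be discarded.

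Finally I would run a parabolic maximum principle to deduce $G\le 0$. Since $\XX$ is possibly non-compact this is not immediate: following the localisation scheme used elsewhere in the paper, I would test the differential inequality against compactly supported cut-offs (or against the mass of the drift heat flow), using the Gaussian heat-kernel estimates — the same ones behind Theorem \ref{hkest} — to show the error terms vanish in the limit, and I would control the behaviour as $t\to0^+$ using $v\ge0$ together with the short-time smoothing bounds for $\nabla h_tu$ and $\Delta h_tu$. The main obstacle I anticipate is the rigorous implementation of the middle step in the non-smooth setting: extracting enough regularity of $h_tu$ for $t>0$ and turning Hamilton's pointwise Bochner manipulation into a bona fide distributional inequality via the measure-valued $\RCD$ Bochner formula — the non-compact maximum principle being a secondary, but still genuine, technical point.
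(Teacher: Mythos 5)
Your proposal matches the paper's strategy: items $\it(1)$ and $\it(2)$ are indeed quoted from \cite{JiangZhang16} and \cite{Jiang15,GarofaloMondino14}, and item $\it(3)$ is proved exactly as you outline, by showing that the Hamilton-type quantity $v_t=\frac{e^{K^-t}-1}{K^-e^{K^-t}}\big(\Delta h_tu+\frac{|\nabla h_tu|^2}{h_tu}\big)-h_tu\big(N+4\log(\|u\|_{L^\infty}/h_tu)\big)$ satisfies $\partial_t v_t\le\DDelta v_t$ on $\{v_t\ge0\}$ (the gradient correction you leave optional is in fact indispensable for the inequality to close, and the subsolution property genuinely holds only on the positivity set, which suffices because the maximum principle tests against $v_t^+$), and then running a Ni--Tam localized maximum principle with Gaussian weights and an iteration over short time intervals. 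The one substantive ingredient your sketch glosses over is the collapsed (weighted) case, where the scalar dimensional Bochner inequality is replaced by Han's Hessian-form inequality for $\Ric_N$ together with the elementary bound $N|A-\ell\otimes\ell|^2+\frac{N}{N-n}(\tr(A)-\lambda)^2\ge(\lambda-|\ell|^2)^2$.
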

	In the statement above and in the rest of the paper, we denote $K^-\defeq -K\vee 0$.

\bigskip
Next, we specialize the bounds of Theorem \ref{grfeac} to the heat kernel on non-collapsed $\RCD$ spaces.
 First, we recall the definition of $f$, \eqref{casdcvsavd}, which will be used also in the definition of the entropy functional. In this note, $\rho_t(x,y)$ denotes the heat kernel.
	\begin{defn}\label{vefdca}
	Let  $(\XX,\dist,\mass)$ be an $\RCD(K,N)$ space. Define
	\begin{equation}\notag
		f_t(x,y)\defeq -\log \rho_t(x,y) -\frac{N}{2}\log t-\frac{N}{2}\log(4\pi).
	\end{equation}
\end{defn}
Notice that we trivially have 	\begin{equation}\notag
	\rho_t(x,y)=(4\pi t)^{-N/2}e^{-f_t(x,y)}.
\end{equation} 
Clearly, $f_t(x,y)$ also depends on $N$, however, we will not write this dependence explicitly.

 It is easy to realize that the bounds of the following Theorem \ref{hkest} are rather sharp.
 	\begin{thm}[Heat kernel estimates]\label{hkest}
	Let $(\XX,\dist,\HH^n)$ be a non-collapsed $\RCD(-(n-1)\delta^2,n)$ space, for $\delta>0$,  and let $p\in\XX$. Assume that $v\in (0,1)$ is such that $\HH^n(B_r(p))\ge v r^n$ for any $r\in(0,\delta^{-1})$. Then, for any $t\in (0,10\delta^{-2})$ and $x\in B_{10\delta^{-1}}(p)$, the following holds. For every $\epsilon\in (0,1)$, there exists a constant $C=C(n,v,\epsilon)$ such that
	\begin{enumerate}
		\item $-C+\frac{\dist^2(x,y)}{(4+\epsilon)t}\le f_t(x,y)\le C+\frac{\dist^2(x,y)}{(4-\epsilon)t}$ for every $y\in B_{10\delta^{-1}}(p)$,
		\item $t|\nabla_y f_t|^2(x,y)\le C +(1+C\delta^2t)\frac{\dist^2(x,y)}{(4-\epsilon)t}$ for $\mass$-a.e.\ $y\in B_{10\delta^{-1}}(p)$,
		\item  $-C\Big(1+\frac{\dist^2(x,y)}{(4-\epsilon)t}\Big)\le t\Delta_y f_t(x,y)\le C +(1+C\delta^2t)\frac{\dist^2(x,y)}{(4-\epsilon)t}$ for $\mass$-a.e.\ $y\in B_{10\delta^{-1}}(p)$.
	\end{enumerate}
	In particular,  for every $t\in (0,10\delta^{-2})$ and $x\in B_{10\delta^{-1}}(p)$,
	\begin{equation}\label{vaaweca}
		|f_t(x,y)|+t|\nabla_y f_t|^2(x,y)+t| \Delta_y f_t(x,y)|\le C(n,v,\epsilon)\Big(1+\frac{\dist^2(x,y)}{t}\Big)\quad\text{ for $\mass$-a.e.\ $y\in B_{10\delta^{-1}}(p)$}.
	\end{equation}
\end{thm}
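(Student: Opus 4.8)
The plan is to obtain all three items — and then the combined estimate \eqref{vaaweca} — from two inputs: the sharp two-sided Gaussian bounds for the heat kernel on $\RCD(K,N)$ spaces, and the gradient and Laplacian bounds of Theorem \ref{grfeac}, the latter applied to the heat kernel itself through the semigroup identity $\rho_t(x,\cdot)=h_s\big(\rho_{t-s}(x,\cdot)\big)$ for $0<s<t$. I would begin with item (1). On any $\RCD(K,N)$ space, for every $\epsilon>0$ there is $C=C(n,\epsilon)$ with
\[
\frac{1}{C\,\HH^n(B_{\sqrt t}(x))}\,e^{-\frac{\dist^2(x,y)}{(4-\epsilon)t}}e^{-CK^-t}\le\rho_t(x,y)\le\frac{C}{\HH^n(B_{\sqrt t}(x))}\,e^{-\frac{\dist^2(x,y)}{(4+\epsilon)t}}e^{CK^-t};
\]
the extra restriction $y\in B_{10\delta^{-1}}(p)$ is what guarantees that $\HH^n(B_{\sqrt t}(y))$, which also enters the lower bound, is comparable to $t^{n/2}$. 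Indeed, since $x,y\in B_{10\delta^{-1}}(p)$, $t<10\delta^{-2}$, and $\HH^n(B_r(p))\ge vr^n$ for $r<\delta^{-1}$, the Bishop--Gromov inequality (used in both directions, together with volume doubling to transport the lower bound from $p$ to $x$ and $y$) gives $c(n,v)t^{n/2}\le\HH^n(B_{\sqrt t}(x)),\HH^n(B_{\sqrt t}(y))\le C(n)t^{n/2}$, while $K^-t=(n-1)\delta^2 t\le 10(n-1)$. Taking $-\log$ of the displayed inequalities and recalling the definition of $f_t$ from Definition \ref{vefdca}, item (1) follows after relabelling $\epsilon$.

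For item (2) I would use $\rho_t(x,\cdot)=h_{(1-\eta)t}(u)$ with $u\defeq\rho_{\eta t}(x,\cdot)$ for a small parameter $\eta\in(0,1)$ to be chosen. Then $u\in L^1(\XX)\cap L^\infty(\XX)$ is nonnegative and not identically zero, and the on-diagonal bound from item (1) gives $\|u\|_{L^\infty}\le C(n,v)(\eta t)^{-n/2}$, so that $\log\big(\|u\|_{L^\infty}/\rho_t(x,y)\big)\le f_t(x,y)+C(n,v)+\tfrac n2\log(1/\eta)$, the powers of $t$ cancelling. Applying Theorem \ref{grfeac}(1) to $u$ at time $(1-\eta)t$ and using $\nabla_y\log\rho_t(x,\cdot)=-\nabla_y f_t(x,\cdot)$ yields
\[
(1-\eta)t\,|\nabla_y f_t|^2(x,y)\le\big(1+2K^-(1-\eta)t\big)\Big(f_t(x,y)+C(n,v)+\tfrac n2\log(1/\eta)\Big)\quad\text{for $\mass$-a.e.\ }y.
\]
Dividing by $1-\eta$, bounding $f_t$ from above by item (1) with a parameter $\epsilon'<\epsilon$, and using $K^-t\le 10(n-1)$, the coefficient of $\dist^2(x,y)/((4-\epsilon')t)$ becomes $(1+C\delta^2 t)\,(1+2\eta)/(1-\eta)$; choosing $\eta=\eta(\epsilon)$ small enough that $\tfrac{1+2\eta}{1-\eta}\cdot\tfrac{1}{4-\epsilon'}\le\tfrac{1}{4-\epsilon}$ (so $\eta\sim\epsilon$, with e.g.\ $\epsilon'=\epsilon/2$) gives item (2), the additive $\tfrac n2\log(1/\eta)$ being absorbed into $C(n,v,\epsilon)$.

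For item (3) the starting point is the chain rule on $\RCD$ spaces — legitimate since $\rho_t(x,\cdot)\in D(\Delta)$ and is everywhere positive and locally bounded below — which gives, $\mass$-a.e.\ in $y$,
\[
t\,\Delta_y f_t(x,y)=-\frac{t\,\Delta_y\rho_t(x,y)}{\rho_t(x,y)}+t\,|\nabla_y f_t|^2(x,y).
\]
For the upper bound on $t\,\Delta_y f_t$, one has $-t\,\Delta_y\rho_t/\rho_t\le C(n)$ from the lower bound on $\Delta h_s u/h_s u$ implied by Theorem \ref{grfeac}(2) (applied to $u=\rho_{t/2}(x,\cdot)$ at time $s=t/2$; its curvature term is $\le C(n)/t$ in our range of $t$), while $t\,|\nabla_y f_t|^2$ is controlled by item (2). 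For the lower bound on $t\,\Delta_y f_t$, one discards the nonnegative term $t\,|\nabla_y f_t|^2$ and applies Theorem \ref{grfeac}(3) (inequality \eqref{veafdcs}) to $u=\rho_{t/2}(x,\cdot)$ at time $t/2$: since $\log\big(\|\rho_{t/2}(x,\cdot)\|_{L^\infty}/\rho_t(x,y)\big)\le f_t(x,y)+C(n,v)$ and $e^{K^-t/2}\le C(n)$, this gives $t\,\Delta_y\rho_t/\rho_t\le C(n,v)\big(1+f_t(x,y)\big)$, and item (1) then yields $t\,\Delta_y f_t\ge -t\,\Delta_y\rho_t/\rho_t\ge -C\big(1+\dist^2(x,y)/((4-\epsilon)t)\big)$.

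Finally, \eqref{vaaweca} follows from items (1)–(3) at once: each of the three terms on its left-hand side is bounded by $C\big(1+\dist^2(x,y)/((4-\epsilon)t)\big)$, and since $1/(4-\epsilon)<1$ for $\epsilon\in(0,1)$ one may replace $(4-\epsilon)t$ by $t$, while $1+C\delta^2 t\le C$ because $\delta^2 t\le 10$. I expect the only genuine obstacle to be item (2): a careless application of Theorem \ref{grfeac}(1) at time $t/2$ produces the constant $2/(4-\epsilon)$ instead of the sharp $1/(4-\epsilon)$, and recovering sharpness forces the splitting $s=(1-\eta)t$ together with a careful bookkeeping of how the loss $(1+2\eta)/(1-\eta)$ and the price $\tfrac n2\log(1/\eta)$ play off against $\epsilon$. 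The remaining care — keeping the volume and curvature comparisons uniform for $x,y\in B_{10\delta^{-1}}(p)$ and $t\in(0,10\delta^{-2})$ — is routine given $\HH^n(B_r(p))\ge vr^n$ and Bishop--Gromov.
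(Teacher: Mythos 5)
Your proposal is correct and follows essentially the same route as the paper: the Gaussian bounds made scale-invariant plus Bishop--Gromov volume comparison for item (1), and the three bounds of Theorem \ref{grfeac} applied to $u=\rho_{\sigma t}(x,\cdot)$ via the semigroup identity for items (2) and (3), with the identity $\Delta_y f_t=-\Delta_y\rho_t/\rho_t+|\nabla_y f_t|^2$ and a small time-splitting parameter tuned against $\epsilon$ to keep the sharp constant $1/(4-\epsilon)$. The only cosmetic differences are that the paper evolves $\rho_{at}$ forward by time $t$ and then rescales $t\mapsto t/(1+a)$, where you split $t=\eta t+(1-\eta)t$ directly, and it uses $\rho_t$ at time $t$ (hence $\rho_{2t}$) for the upper bound in item (3) where you use $\rho_{t/2}$ at time $t/2$.
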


\begin{rem}\label{vfdvfdcd}
	For Theorem \ref{hkest}, some effort is made, especially for items $\it (2)$ and $\it (3)$, to obtain bounds that do not involve exponential terms. We remark that,  exploiting naively the Gaussian bounds of Proposition \ref{gaussbound} below, we would get the following weaker bounds. Let $(\XX,\dist,\mass)$ be an $\RCD(K,N)$ space.
	Then, for every $t\in [a,b]\subseteq(0,\infty)$,
	\begin{equation}\label{vaawecaweak}
		\begin{split}
			|f_t(x,y)|\le C(K,N,\epsilon,a,b)\Big(1+|\log \mass(B_{\sqrt{t}}(x))\big|+ \frac{\dist^2(x,y)}{t}\Big)\quad\text{for every $x,y\in\XX$},\\
			t|\nabla_y f_t|^2(x,y)+t| \Delta_y f_t(x,y)|\le C(K,N,\epsilon, b) \exp(\epsilon\frac{\dist^2(x,y)}{t})\quad\text{for $(\mass\otimes\mass)$-a.e.\ $x,y\in\XX$}.
		\end{split}
	\end{equation}
	\fr
\end{rem}

\subsection{Perelman's entropy}
Now we recall the definition of Perelman's $\mathcal{W}$ entropy, \eqref{casdcvsavd1}.
In view of this, recall Definition \ref{vefdca}. Actually, our working definition of $\mathcal{W}$ entropy is going to be slightly different from \eqref{casdcvsavd1}  (see \cite{CJNrect}), but will turn out to be equivalent to \eqref{casdcvsavd1}, see Remark \ref{efdscacsdsa}.
\begin{defn}\label{defnentropy}
	Let  $(\XX,\dist,\mass)$ be an $\RCD(K,N)$ space. Define, for every $t>0$ and $x\in\XX$, the $L^\infty_{\mathrm{loc}}$ function  of $y$
	\begin{equation}\notag
		W_t(x,y)\defeq 
		2t\Delta_y f_t(x,y)-t|\nabla_y f_t|^2(x,y)+f_t(x,y)-N.
	\end{equation}
	Define also, for $t>0$ and $x\in\XX$,
	\begin{equation}\notag
		\mathcal{W}_t(x)\defeq \int_\XX W_t(x,y) \rho_t(x,y)\dd\mass(y).
	\end{equation}
\end{defn}
\begin{rem}\label{efdscacsdsa}
    We are going to prove that 
    \begin{equation}\notag
        \mathcal{W}_t(x)=\int_\XX (t|\nabla_y f_t|^2(x,y)+f_t(x,y)-N)\rho_t(x,y)\dd\mass(y),
    \end{equation}
    as the latter expression  may be found more frequently in the literature.
\end{rem}

Now we state a sample theorem, which addresses \textbf{Monotonicity} and \textbf{Rigidity} of the entropy on $\RCD(0,N)$ spaces, as well as  the formula for the time derivative of the entropy. We recall once more that, besides the formula \eqref{diffformula}, the following result has been proved in  \cite{KuwadaLi} with a different technique. Moreover, \eqref{diffformula} has been proved in the smooth context, under additional assumptions (that we are not going to make), in \cite{LiMathann}. In the smooth context, \eqref{vfedcdasc} and \eqref{diffformula} correspond, respectively, to the unweighted and weighted case.
We refer to Section \ref{bvefdcsacsd} for additional related results. In particular,  Theorem \ref{vefadcas} contains the precise formula for the derivative in time of the modified Perelman's entropy (i.e.\ in  presence of a cut-off function), Corollary \ref{bvrfdca} is the rigorous limit of the formula Theorem \ref{bvefdcsacsd} (where the cut-off is constantly equal to $1$) and Corollary \ref{corgrfdsa} should be compared to the results needed in \cite[Section 4.6]{CJNrect}. 

In the following result, a key role is played by the (modified) Ricci tensor on $\RCD$ spaces, which is defined in \cite{Han14}, see also \cite{Gigli14}. We are going to add a short discussion on the topic at the end of Section \ref{prelsect}.
\begin{thm}\label{entrointro}
	Let  $(\XX,\dist,\mass)$ be an $\RCD(0,N)$ space of essential dimension $n$ and let $x\in\XX$. Then, the following hold.
	\begin{description}
		\item[\quad Monotonicity] The curve $t\mapsto\mathcal{W}_{t}(x)$ is locally absolutely continuous in $(0,\infty)$,  is differentiable at every $t>0$, and is non-increasing. More precisely,
		\begin{equation}\label{diffformula}
\begin{split}
								\partial_t \mathcal{W}_{t}(x)&=-2t\int\bigg(	\Ric_N(\nabla_y f_t,\nabla_y f_t)+\Big| \hess_y f_t-\frac{1}{2t}g\Big|^2\\&\qquad\qquad\qquad+\frac{1}{N-n}\Big(\big(\tr(\hess f_t)-\Delta_y f_t\big)+\frac{N-n}{2t}\Big)^2\bigg)(x,y)\rho_t(x,y)\dd\mass(y).
\end{split}
		\end{equation}

		\item[\quad Rigidity] Assume that for some $t>0$, $\partial_t \mathcal{W}_{t}(x)=0$.  Then $t\mapsto \mathcal{W}_{t}(x)$ is constant.  Moreover
		\begin{itemize}
			\item either $(\XX,\dist,\mass)$  is isomorphic to the cone built over an $\RCD(N-2,N-1)$ space,  and $x$ corresponds to the tip, or
			\item  $(\XX,\dist,\mass)$  is isomorphic  either to $(\RR,\dist_e,c|x|^{N-1}\LL^1)$ or to $([0,\infty),\dist_e,c|x|^{N-1}\LL^1)$, for some $c>0$, and $x$ corresponds to $0$.
		\end{itemize}
	\end{description}
\end{thm}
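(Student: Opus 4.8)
The plan is to obtain \textbf{Monotonicity}, together with the formula \eqref{diffformula}, by differentiating in time a \emph{cut-off modified} version of the entropy and then letting the cut-off tend to $1$, using the heat kernel bounds; \textbf{Rigidity} will then be read off from the vanishing of the three nonnegative terms in \eqref{diffformula}. Throughout, $x$ is fixed and we write $\rho_t=\rho_t(x,\cdot)$, $f_t=f_t(x,\cdot)$, so that $\rho_t=(4\pi t)^{-N/2}e^{-f_t}$, $\nabla_y\rho_t=-\rho_t\nabla_y f_t$, and, since $\partial_t\rho_t=\Delta_y\rho_t$, also $\partial_t f_t=\Delta_y f_t-|\nabla_y f_t|^2-\tfrac{N}{2t}$. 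For a Lipschitz function $\phi$ with bounded support put $\mathcal{W}_t^\phi(x)\defeq\int_\XX W_t\,\phi\,\rho_t\dd\mass(y)$. Since the integrand is supported in a fixed compact set, $t\mapsto\mathcal{W}_t^\phi(x)$ can be differentiated directly, using $\partial_t\rho_t=\Delta_y\rho_t$, the evolution equation for $f_t$, repeated integration by parts, and --- crucially --- the self-improved Bochner (Bakry--\'Emery) inequality on $\RCD(0,N)$ spaces, which beyond $\Ric_N(\nabla_y f_t,\nabla_y f_t)+|\hess_y f_t|^2$ also produces the dimensional term $\tfrac1{N-n}\big(\tr(\hess f_t)-\Delta_y f_t\big)^2$ (cf.\ the refined second-order calculus of \cite{Han14,Gigli14}). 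Completing the square in $t$, one obtains a formula of the shape
\[
\partial_t\mathcal{W}_t^{\phi}(x)=-2t\int_\XX I_t\,\phi\,\rho_t\dd\mass(y)+\mathcal{E}_t[\phi],
\]
where $I_t$ is the nonnegative integrand of \eqref{diffformula} and $\mathcal{E}_t[\phi]$ collects all the terms in which at least one derivative falls on $\phi$ --- schematically, a finite sum of integrals of $\rho_t$ against polynomial expressions in $(f_t,\nabla_y f_t,\Delta_y f_t)$ multiplied by $\nabla_y\phi$ or $\Delta_y\phi$. This is the content of Theorem \ref{vefadcas}.

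Next, take $\phi=\phi_R$ with $\phi_R\equiv1$ on $B_R(x)$, $\mathrm{supp}\,\phi_R\subseteq B_{2R}(x)$, $|\nabla_y\phi_R|\le C/R$ and $|\Delta_y\phi_R|\le C/R^2$ (such good cut-offs exist on $\RCD$ spaces). On the annulus $B_{2R}(x)\setminus B_R(x)$ the Gaussian upper bound for $\rho_t$, combined with the estimates of Theorem \ref{hkest} in the non-collapsed case --- or with the weaker bounds \eqref{vaawecaweak} of Remark \ref{vfdvfdcd} in general --- control $|f_t|$, $t|\nabla_y f_t|^2$ and $t|\Delta_y f_t|$ by $C\big(1+\dist^2(x,\cdot)/t\big)$ (up to a $y$-independent $|\log\mass(B_{\sqrt t}(x))|$ term and an $e^{\epsilon\dist^2/t}$ factor in the general case). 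Hence $\mathcal{E}_t[\phi_R]\to0$ as $R\to\infty$, locally uniformly in $t\in(0,\infty)$, while the same bounds and dominated convergence give that the principal term converges to the right-hand side of \eqref{diffformula} and that $\mathcal{W}_t^{\phi_R}(x)\to\mathcal{W}_t(x)$ for every $t$ --- this also proves Remark \ref{efdscacsdsa}, using $\int\Delta_y f_t\,\rho_t\dd\mass(y)=\int|\nabla_y f_t|^2\rho_t\dd\mass(y)$, a consequence of $\nabla_y\rho_t=-\rho_t\nabla_y f_t$. Passing to the limit $R\to\infty$ in the identity $\mathcal{W}_t^{\phi_R}(x)-\mathcal{W}_s^{\phi_R}(x)=\int_s^t\partial_\tau\mathcal{W}_\tau^{\phi_R}(x)\,\dd\tau$ shows that $t\mapsto\mathcal{W}_t(x)$ is locally absolutely continuous with derivative given by \eqref{diffformula} (this is Corollary \ref{bvrfdca}); continuity in $t$ of the integrand of \eqref{diffformula} (again by dominated convergence) upgrades this to $C^1$, so the curve is differentiable at every $t>0$. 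Finally, since the space is $\RCD(0,N)$ we have $\Ric_N\ge0$, so all three terms in \eqref{diffformula} are nonnegative and $\partial_t\mathcal{W}_t(x)\le0$; this is \textbf{Monotonicity}.

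For \textbf{Rigidity}, suppose $\partial_t\mathcal{W}_{t_0}(x)=0$ for some $t_0>0$. Since $\rho_{t_0}>0$ $\mass$-a.e., each of the three nonnegative terms in \eqref{diffformula} vanishes $\mass$-a.e.; in particular $\hess_y f_{t_0}=\tfrac1{2t_0}g$. By the lower bound for $f_{t_0}$ the function $f_{t_0}$ is proper and, being locally Lipschitz with $\hess_y f_{t_0}=\tfrac1{2t_0}g>0$, uniformly convex, so it attains its minimum $m$ at exactly one point; integrating the Hessian identity along minimizing geodesics issued from that point gives $f_{t_0}=\tfrac1{4t_0}\dist^2(x,\cdot)+m$, so the point is $x$. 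Then $r\defeq2\sqrt{t_0(f_{t_0}-m)}=\dist(x,\cdot)$ satisfies $|\nabla r|\equiv1$ and $\hess r=\tfrac1r\big(g-\dd r\otimes\dd r\big)$, which by the cone-rigidity results on $\RCD$ spaces (see \cite{DPG17} and the references therein) forces $(\XX,\dist,\mass)$ to be a metric measure cone with vertex $x$ and cone measure of exponent $N$; being $\RCD(0,N)$, its link is an $\RCD(N-2,N-1)$ space of diameter $\le\pi$, giving the first alternative when $n\ge2$ and the two one-dimensional weighted models when $n=1$. The remaining identities $\Ric_N(\nabla_y f_{t_0},\nabla_y f_{t_0})=0$ and $\tr(\hess f_{t_0})-\Delta_y f_{t_0}=-\tfrac{N-n}{2t_0}$ are automatically satisfied on such a cone, so no information is lost. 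Lastly, such a cone is self-similar under the dilations fixing $x$, under which $\mass$ scales with exponent $N$; consequently $\rho_t(x,\cdot)$ --- and hence, by the $\tfrac N2\log t$ normalization in Definition \ref{vefdca}, also $f_t(x,\cdot)$ and $W_t(x,\cdot)$ --- are invariant under the corresponding joint rescalings of space and time, so $\mathcal{W}_t(x)$ is independent of $t$.

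The main obstacle is the differentiation-and-limit step. Performing the Bochner computation for $\mathcal{W}_t^\phi$ relies on the full self-improved second-order calculus on $\RCD$ spaces, and requires care with the merely $L^2_{\mathrm{loc}}$ regularity of $\hess_y f_t$ and $\Delta_y f_t$ and with the precise bookkeeping of the error $\mathcal{E}_t[\phi]$; moreover the passage $\phi_R\to1$ is delicate because $\mathcal{E}_t[\phi_R]$ involves first and second derivatives of $f_t$ over regions escaping to infinity, where the only available information is that of Theorem \ref{hkest}/Remark \ref{vfdvfdcd} --- quantifying the decay of $\mathcal{E}_t[\phi_R]$ uniformly on compact time intervals is exactly what those estimates are designed for. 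By contrast, the rigidity step is comparatively soft, resting on the by-now-standard cone-characterization of $\RCD$ spaces.
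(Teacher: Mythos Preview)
Your treatment of \textbf{Monotonicity} follows the paper's line (Theorem~\ref{vefadcas} then Corollary~\ref{bvrfdca}). Two small remarks: the paper organizes the computation so that the cut-off error is exactly $\int (\Delta\varphi)\, W_t\,\rho_t$ --- only $\Delta\varphi$ appears, not $\nabla\varphi$, because one writes $(\partial_t-\DDelta)(W_t\rho_t)$ and the heat equation absorbs the rest --- and the continuity in $t$ of $\partial_t\mathcal{W}_t$ is obtained by first integrating ${\bf I}_{k,t}$ by parts to eliminate $|\hess f_t|^2$ before applying dominated convergence; the Hessian itself is not obviously continuous in $t$, so ``continuity of the integrand'' is too quick.

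Your \textbf{Rigidity} argument has a genuine gap. The step ``integrating the Hessian identity along minimizing geodesics'' --- passing from the weak, $\mass$-a.e.\ equality $\hess_y f_{t_0} = \tfrac{1}{2t_0} g$ (in the sense of \cite{Gigli14}) to the pointwise formula $f_{t_0}(y) = \tfrac{1}{4t_0}\dist^2(p,y) + m$ --- is not a standard operation on $\RCD$ spaces, and you do not justify it. The weak Hessian is defined by duality against test functions; turning a $\mass$-a.e.\ tensor identity into a second-derivative identity along \emph{individual} geodesics requires a regularity statement that is not available off the shelf, and here $f_{t_0}$ is unbounded, hence not even in $\mathrm{TestF}(\XX)$. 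You also do not explain why the minimizer $p$ equals $x$, nor is the Hessian condition on $r=\dist(x,\cdot)$ the hypothesis under which the cone theorem is stated.

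The paper sidesteps all of this. From the vanishing at a single time it extracts only the Laplacian identity $\Delta h_1=\tfrac{N}{2}$ (with $h_t\defeq t f_t$), which is robust. It then shows --- via analyticity in $t$ of $\Phi_t\defeq\int(\Delta\varphi)h_t$ together with an induction proving $\partial_t^k\Phi_t|_{t=1}=0$ for every $k$ --- that $\Delta h_t=\tfrac{N}{2}$ for all $t>0$. Sending $t\searrow 0$ and using the Varadhan short-time asymptotic $t\log\rho_t(x,\cdot)\to -\dist^2(x,\cdot)/4$ (a consequence of the two-sided Gaussian bounds) yields $\Delta\dist^2(x,\cdot)=2N$ directly, with the base point $x$ appearing automatically. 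A Gauss--Green computation then gives $\partial_r\big(\mass(B_r(x))/r^N\big)=0$, and the volume-cone-to-metric-cone theorem of \cite{DPG16} (not \cite{DPG17}) concludes. Constancy of $t\mapsto\mathcal{W}_t(x)$ is obtained along the way, without appealing to scale-invariance of the limit cone.
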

To be more precise, the \textbf{Rigidity} statement of \cite{KuwadaLi} is slightly more powerful, as it is proved for any solution of the heat equation in place of the heat kernel (recall  Definition \ref{vefdca}). In both the proofs the main part of the argument is to obtain equality in the Li--Yau gradient bound, use the short time asymptotic for the heat kernel to deduce equality in the Laplacian comparison inequality, and finally  conclude by \cite{DPG16}.

\section*{Acknowledgements}
Part of this work was carried out while the author was a PhD student at Scuola Normale Superiore, during a visit to E.\ Bruè and A.\ Pigati at Bocconi University. The author wishes to thank E.\ Bruè and A.\ Pigati for many discussions around this topic. The author is grateful to L.\ Gennaioli for comments about an earlier draft of this work and to N.\ Gigli and X.-D.\ Li for discussions about the entropy functional.

	This material is based upon work supported by the National Science Foundation under Grant No.\ DMS-1926686.
\section{Preliminaries}\label{prelsect}

Our investigation is set in $\RCD(K, N)$ metric measure spaces (for $K\in\RR$ and $N\ge 1$). These are infinitesimally Hilbertian spaces, \cite{Gigli12}, that satisfy  a lower Ricci curvature bound and an upper dimension bound  in synthetic sense according to \cite{Sturm06I,Sturm06II}, \cite{Lott-Villani09}.  We assume familiarity with this subject throughout,  see \cite{Villani2017,AmbICM, gigli2023giorgi} and references therein. For notions of calculus on this spaces (up to the second order), we refer to \cite{Gigli14} or \cite{GP19}. We are now going to recall some of the notions used most frequently in this paper.

\bigskip

The following lemma from  \cite{Mondino-Naber14} is going to be useful for us.
\begin{lem}[{Good cut-off functions,  \cite[Lemma 3.1]{Mondino-Naber14}}]\label{goodcutoff}
	Let $(\XX,\dist,\mass)$ be an $\RCD(K,N)$ space. Then, for every $R>0$, for every $r\in (0,R),$ and $x\in\XX$, there exists a function $\varphi\in \LIP_{\mathrm{bs}}(\XX)\cap D(\Delta)$  satisfying
	\begin{enumerate}
		\item  $0\le \varphi\le 1$  on $\XX$, $\varphi=1$ on $B_r(x)$, and $\mathrm{supp}(\varphi) \subseteq B_{2r}(x)$;
		\item $r^2|\Delta\varphi|+ r|\nabla\varphi|\le C(K^{-}R^2,N)$.
	\end{enumerate}
\end{lem}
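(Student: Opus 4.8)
The plan is to obtain $\varphi$ by regularizing a rough Lipschitz cut-off through the heat semigroup and then post-composing with a one-dimensional profile, in order to restore the exact normalization and the compact support. Fix $x$, $R$ and $r\in(0,R)$, and set $s\defeq\varepsilon r^2$, where $\varepsilon=\varepsilon(K^-R^2,N)\in(0,1)$ will be chosen later. As a starting point, take the explicit Lipschitz function $\psi\defeq\min\{1,\max\{0,2(7r/4-\dist(x,\cdot))/r\}\}$; then $\psi\in\LIP_{\mathrm{bs}}(\XX)$, $0\le\psi\le1$, $\psi\equiv1$ on $B_{5r/4}(x)$, $\mathrm{supp}\,\psi\subseteq\overline B_{7r/4}(x)$, and $|\nabla\psi|\le 2/r$ $\mass$-a.e.

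Set $\widetilde\varphi\defeq h_s\psi$. Since $\psi\in L^2(\XX)\cap L^\infty(\XX)\cap W^{1,2}(\XX)$, we have $\widetilde\varphi\in D(\Delta)\cap L^\infty(\XX)$; by the Bakry--Émery gradient estimate, $|\nabla\widetilde\varphi|^2\le e^{2K^-s}h_s(|\nabla\psi|^2)\le \tfrac{4}{r^2}e^{2K^-R^2}$, so, by the Sobolev-to-Lipschitz property, $\widetilde\varphi$ admits a Lipschitz representative with $r|\nabla\widetilde\varphi|\le C(K^-R^2,N)$. Next, $\Delta\widetilde\varphi=\partial_s h_s\psi$, so the locally uniform Gaussian bound for the time-derivative of the heat kernel (a consequence of Proposition \ref{gaussbound}) together with Bishop--Gromov volume comparison gives $|\Delta\widetilde\varphi|\le \int_\XX|\partial_s\rho_s(\cdot,z)|\,\dd\mass(z)\le C(K^-R^2,N)/s$ $\mass$-a.e. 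Finally, from the standard tail bound $\int_{\XX\setminus B_\rho(y)}\rho_s(y,z)\,\dd\mass(z)\le C_0\exp(-\rho^2/(C_0 s))$, with $C_0=C_0(K^-R^2,N)$, and using that $\mathrm{supp}\,\psi\subseteq\overline B_{7r/4}(x)$ while $\mathrm{supp}(1-\psi)\subseteq\XX\setminus B_{5r/4}(x)$, one obtains $\widetilde\varphi\le C_0\exp(-1/(16C_0\varepsilon))$ on $\XX\setminus B_{2r}(x)$ and $1-\widetilde\varphi\le C_0\exp(-1/(16C_0\varepsilon))$ on $B_r(x)$. We now fix $\varepsilon=\varepsilon(K^-R^2,N)\in(0,1)$ small enough that both right-hand sides are $\le 1/4$; with this choice, $C(K^-R^2,N)/s=C(K^-R^2,N)/(\varepsilon r^2)$ is again of the form $C(K^-R^2,N)/r^2$.

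To conclude, pick $\zeta\in C^\infty(\RR)$ with $0\le\zeta\le1$, $\zeta\equiv0$ on $(-\infty,1/4]$, $\zeta\equiv1$ on $[3/4,\infty)$, and $\|\zeta'\|_\infty+\|\zeta''\|_\infty\le C$, and set $\varphi\defeq\zeta\circ\widetilde\varphi$. By the previous step, $\varphi\equiv1$ on $B_r(x)$ and $\varphi\equiv0$ on $\XX\setminus B_{2r}(x)$, which gives item (1). Since $\widetilde\varphi\in D(\Delta)\cap\LIP(\XX)$ with $|\nabla\widetilde\varphi|$ and $\Delta\widetilde\varphi$ in $L^\infty(\XX)$, the chain rule for the Laplacian on $\RCD$ spaces (see \cite{Gigli14,GP19}) yields $\varphi\in\LIP_{\mathrm{bs}}(\XX)\cap D(\Delta)$ with $\Delta\varphi=\zeta'(\widetilde\varphi)\Delta\widetilde\varphi+\zeta''(\widetilde\varphi)|\nabla\widetilde\varphi|^2$, whence $r|\nabla\varphi|+r^2|\Delta\varphi|\le C(K^-R^2,N)$, which is item (2).

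The main obstacle is the $L^\infty$ bound on $\Delta\widetilde\varphi$: this rests on the Gaussian estimate for $\partial_s\rho_s$ on $\RCD(K,N)$ spaces (with constants depending only on $N$ and on an upper bound for the product of $K^-$ and the time scale, here $s\le R^2$) and on Bishop--Gromov comparison to control the volume ratios arising when the Gaussian factor is integrated. One also has to be careful to fix $\varepsilon$ as a function of $K^-R^2$ and $N$ before absorbing the resulting $1/\varepsilon$ into the final constant. An alternative route, closer to the original argument of \cite{Mondino-Naber14}, avoids the heat semigroup altogether and builds $\varphi$ by post-composing $\dist(x,\cdot)^2$ --- whose distributional Laplacian on $B_{2R}(x)$ is bounded above by $C(K^-R^2,N)$ by the Laplacian comparison theorem --- with a suitable concave cut-off profile; this is more elementary but demands care with the sign of the singular part of the Laplacian of $\dist(x,\cdot)^2$.
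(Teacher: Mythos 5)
Your construction is correct and coincides with the argument of Mondino--Naber that the paper simply cites for this lemma (and recalls later, in the proof of Corollary \ref{bvrfdca}): mollify a rough Lipschitz cut-off by $h_s$ with $s$ a small multiple of $r^2$, control $|\nabla h_s\psi|$ by Bakry--\'Emery and $\Delta h_s\psi=\partial_s h_s\psi$ by the Gaussian bound on $\partial_s\rho_s$ plus Bishop--Gromov, and post-compose with a $C^2$ profile to restore the exact values $0$ and $1$. The only cosmetic point is that, as written, your tail estimate yields $\varphi\equiv 0$ only outside $B_{2r}(x)$, hence $\mathrm{supp}(\varphi)\subseteq \overline{B_{2r}(x)}$; applying the same estimate at radius $15r/8$ gives $\mathrm{supp}(\varphi)\subseteq B_{2r}(x)$ as stated.
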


\bigskip

On $\RCD(K,N)$ spaces there is an established  theory for solutions of the heat equation. In particular, there exists a  locally H\"{o}lder continuous
heat kernel $\rho_t(x,y)$, by \cite{Sturm96II,Sturm96III}. We collect in the next statement some properties of the heat kernel that will be needed throughout.
For $\it (1)$ and $(2)$, see \cite{Sturm96II,Sturm96III} and \cite{jiang2014heat}. For $\it (3)$, see also \cite{Sturm96II} after \cite{SCheat}, or \cite{Davies}.
\begin{prop}[Gaussian bounds]\label{gaussbound}
	Let $(\XX,\dist,\mass)$ be an $\RCD(K,N)$ space. 
	For every $\epsilon\in (0,1)$, there exists a constant $C=C(K,N,\epsilon)$  such that we have the following Gaussian bounds.
	\begin{enumerate}
		\item For every $x,y\in\XX$ and $t>0$, 
		\begin{equation}\notag
			\frac{1}{C\mass(B_{\sqrt{t}}(x))}\exp(-\frac{\dist^2(x,y)}{(4-\epsilon)t}-Ct)\le \rho_t(x,y)\le \frac{C}{\mass(B_{\sqrt{t}}(x))}\exp(-\frac{\dist^2(x,y)}{(4+\epsilon)t}+Ct).
		\end{equation}
		\item For every $x\in\XX$ and $t>0$,  
		\begin{equation}\notag
			|\nabla_y \rho_t|(x,y)\le \frac{C}{\sqrt{t}\mass(B_{\sqrt{t}}(x))}\exp(-\frac{\dist^2(x,y)}{(4+\epsilon)t}+Ct)\quad\text{for $\mass$-a.e.\ $y\in\XX$}.
		\end{equation}
		\item For every $y\in\XX$ and $t>0$,  
		\begin{equation}\notag
			|\partial_t^k \rho_t|(x,y)\le \frac{C(K,N,\epsilon,k)}{{t^k}\mass(B_{\sqrt{t}}(x))}\exp(-\frac{\dist^2(x,y)}{(4+\epsilon)t}+Ct)\quad\text{for $\mass$-a.e.\ $x\in\XX$}.
		\end{equation}
	\end{enumerate}
	
\end{prop}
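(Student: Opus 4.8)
The Gaussian bounds in Proposition~\ref{gaussbound} are collected from the literature, and the quickest route is to invoke \cite{Sturm96II,Sturm96III,jiang2014heat,SCheat,Davies} directly; here I sketch the reconstruction I would carry out. The plan is to use only the two structural consequences of the $\RCD(K,N)$ condition that survive in the non-smooth setting --- the (local) Bishop--Gromov volume doubling inequality and the (local, weak) Poincar\'e inequality, with constants depending on the scale $R$ only through $K^-R^2$ --- together with the Bakry--\'Emery gradient contraction. Gradients are read in the first-order calculus of \cite{Gigli14}, which accounts for the ``$\mass$-a.e.''\ qualifications. For item $\it(1)$, I would invoke the Grigor'yan--Saloff-Coste theory, in the Dirichlet-form formulation of Sturm: on a strongly local Dirichlet space, volume doubling plus a weak Poincar\'e inequality is equivalent to the parabolic Harnack inequality, and the latter is equivalent to matching two-sided Gaussian heat kernel bounds. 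The exponential-in-time corrections $e^{\pm Ct}$ are the signature of \emph{locality}: for $K<0$ the doubling and Poincar\'e constants degenerate at large scales, and the standard localization produces exactly a factor $e^{O(t)}$ of the displayed shape; sharpening the exponent to $4\pm\epsilon$ is the classical improvement carried out in \cite{jiang2014heat}.

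For item $\it(2)$, I would deduce the gradient bound from $\it(1)$ by parabolic regularity. One way: the Caccioppoli (reverse Poincar\'e) inequality for caloric functions gives an $L^2$-in-$y$ control of $|\nabla_y\rho_t|$ on parabolic cylinders, which a Moser iteration --- available since doubling and Poincar\'e hold --- upgrades to the pointwise Gaussian bound. A second, more $\RCD$-native way: combine the semigroup identity $\rho_{2s}(x,\cdot)=P_s\big(\rho_s(x,\cdot)\big)$ with a localized $L^\infty$-gradient estimate $|\nabla P_s g|\le \tfrac{C(K,s)}{\sqrt s}\|g\|_\infty$ (itself a consequence of the Bakry--\'Emery estimate), splitting $\rho_s(x,\cdot)$ into its near- and far-from-$y$ parts to recover the Gaussian decay from $\it(1)$; this is essentially the argument of \cite{jiang2014heat}.

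For item $\it(3)$, I would use analyticity of the heat semigroup. Since $t\mapsto\rho_t(x,\cdot)$ solves the heat equation, $\partial_t^k\rho_t=\Delta_y^k\rho_t$, and $P_z$ extends holomorphically to the sector $|\arg z|<\pi/2$ with the kernel $\rho_z(x,y)$ still obeying a Gaussian bound of the same form for $z$ in a subsector --- this is the point where one follows \cite{Davies}. Then Cauchy's formula,
\[
\partial_t^k\rho_t(x,y)=\frac{k!}{2\pi i}\oint_{|z-t|=t/2}\frac{\rho_z(x,y)}{(z-t)^{k+1}}\,\dd z,
\]
together with the Gaussian bound for $|\rho_z(x,y)|$ on the contour gives the claim, the $k$-dependence of the constant coming from $k!\,(t/2)^{-k}$.

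\textbf{Main obstacle.} The real work is that for $K<0$ none of the classical (smooth-manifold, or global-doubling) statements apply off the shelf: one must track the scale dependence of the doubling and Poincar\'e constants through the localization and check that it produces only the harmless $e^{O(t)}$ factor in exactly the stated form, with the sharp $4\pm\epsilon$ in the exponent. A secondary technical point is that every pointwise assertion about $\nabla_y\rho_t$ and $\Delta_y\rho_t$ must be phrased through the first- and second-order calculus on $\RCD$ spaces, so the Caccioppoli/Moser and De Giorgi--Nash--Moser arguments have to be run in the $\PI$-space framework rather than for classical weak solutions.
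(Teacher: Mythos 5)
The paper gives no proof of this proposition: it is quoted from the literature, with exactly the references you invoke (Sturm's Dirichlet-space papers and \cite{jiang2014heat} for items (1)--(2), \cite{SCheat}/\cite{Davies} for item (3)). Your reconstruction sketch correctly identifies those sources and the standard arguments behind them (doubling + Poincar\'e $\Rightarrow$ parabolic Harnack $\Rightarrow$ two-sided Gaussian bounds, Bakry--\'Emery/Moser for the gradient, complex-time analyticity and Cauchy's formula for the time derivatives), so it is consistent with the paper's treatment.
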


	\bigskip
	
	We will need also the notion of measure valued (distributional) Laplacian,  \cite{Gigli14}, that we denote by $\DDelta$. In particular, for $f\in H^{1,2}$, we say that $f\in D(\DDelta)$ if there exists a (unique) finite measure $\DDelta f$ such that 
	\begin{equation}
		\int \nabla f\,\cdot\,\nabla g\dd\mass=-\int g\dd\DDelta f\quad\text{for every $g\in \LIP_{\mathrm{bs}}(\XX)$}.
	\end{equation}  
	We will often use the localized counterpart of the previous definition, which leads to the domain $D_{\mathrm{loc}}(\DDelta)$. 
	By \cite{Savare13}, by an adaptation of the arguments of \cite{Bakry83}, it is proved that for every $f\in \mathrm{TestF}(\XX)$, it holds that $|\nabla f|^2\in D(\DDelta)$. 
	Here, as in \cite{Gigli14}, we denote the space of test functions as
	\begin{equation}
		\mathrm{TestF}(\XX)\defeq \{f\in \LIP_{\mathrm{b}}(\XX)\cap D(\Delta): \Delta f\in H^{1,2}\}.
	\end{equation}

	We state and prove an elementary lemma that will turn out to be useful. For the notion of quasi-continuous representative, see \cite{debin2019quasicontinuous}.
	\begin{lem}\label{bgrfdva1}
		Let $f,g\in D(\DDelta)\cap L^\infty\cap H^{1,2}$. Then $fg\in D(\DDelta)\cap L^\infty\cap H^{1,2}$ with
		$$
		\DDelta (fg)= f\DDelta g+ g\DDelta f+2\nabla f\,\cdot\,\nabla g,
		$$
		where we implicitly took the quasi-continuous representatives of $f$ and $g$.
	\end{lem}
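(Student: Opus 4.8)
The plan is to exhibit directly the finite measure that should be $\DDelta(fg)$ and to verify the defining integration-by-parts identity against every $h\in\LIP_{\mathrm{bs}}(\XX)$. That $fg\in L^\infty$ is immediate, and $fg\in H^{1,2}$ with $\nabla(fg)=f\nabla g+g\nabla f$ is the Leibniz rule for products of bounded Sobolev functions (see e.g.\ \cite{Gigli14}), so the only genuine content is $fg\in D(\DDelta)$ together with the formula. As a candidate I would take
$$
\mu\defeq \tilde f\,\DDelta g+\tilde g\,\DDelta f+2\,(\nabla f\,\cdot\,\nabla g)\,\mass,
$$
writing $\tilde f,\tilde g$ for the quasi-continuous representatives. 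First I would check that $\mu$ is a well-defined finite signed measure: the density $\nabla f\,\cdot\,\nabla g$ lies in $L^1(\mass)$ by Cauchy--Schwarz, while $\tilde f\,\DDelta g$ and $\tilde g\,\DDelta f$ are finite measures because $f,g$ are bounded, $\DDelta g,\DDelta f$ are finite, and $|\DDelta g|,|\DDelta f|$ do not charge sets of zero capacity (see \cite{debin2019quasicontinuous}), so that $\tilde f$ and $\tilde g$ are unambiguously defined $|\DDelta g|$- and $|\DDelta f|$-a.e.

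The heart of the argument is the following extension of the defining property of $\DDelta$: if $w\in D(\DDelta)$ and $\varphi\in H^{1,2}\cap L^\infty$, then
\begin{equation}\label{eq:extendedibp}
\int \nabla w\,\cdot\,\nabla\varphi\dd\mass=-\int\tilde\varphi\dd\DDelta w.
\end{equation}
To prove \eqref{eq:extendedibp} I would approximate $\varphi$ by functions $\varphi_k\in\LIP_{\mathrm{bs}}(\XX)$ with $\varphi_k\to\varphi$ in $H^{1,2}$ (using density of $\LIP_{\mathrm{bs}}(\XX)$ in $H^{1,2}(\XX)$, itself obtained from density of Lipschitz functions and multiplication by Lipschitz cut-offs) and, after truncating, with $\|\varphi_k\|_{L^\infty}\le\|\varphi\|_{L^\infty}$. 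Then the left-hand sides converge by $H^{1,2}$-convergence; on the right-hand side, a subsequence of the $\varphi_k$ converges quasi-everywhere, hence $\DDelta w$-a.e.\ (again since $\DDelta w$ does not charge polar sets), and the limit passes through by dominated convergence, because $|\DDelta w|$ is finite and the $\varphi_k$ are uniformly bounded; along this subsequence both sides of the identity for $\varphi_k\in\LIP_{\mathrm{bs}}(\XX)$ converge to the two sides of \eqref{eq:extendedibp}.

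Finally, I would fix $h\in\LIP_{\mathrm{bs}}(\XX)$ and apply \eqref{eq:extendedibp} with $(w,\varphi)=(g,fh)$ and with $(w,\varphi)=(f,gh)$, noting that $fh,gh\in H^{1,2}\cap L^\infty$ and that their quasi-continuous representatives are $\tilde f\,h$ and $\tilde g\,h$ because $h$ is continuous. Expanding $\nabla(fh)=f\nabla h+h\nabla f$ and $\nabla(gh)=g\nabla h+h\nabla g$ and summing the two resulting identities, the terms $\int h\,\nabla f\,\cdot\,\nabla g\dd\mass$ combine to $2\int h\,\nabla f\,\cdot\,\nabla g\dd\mass$, the left-hand side becomes $\int \nabla(fg)\,\cdot\,\nabla h\dd\mass$, and the right-hand side becomes $-\int h\dd\mu$. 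Since $h\in\LIP_{\mathrm{bs}}(\XX)$ was arbitrary and $\mu$ is finite, this is exactly the assertion that $fg\in D(\DDelta)$ with $\DDelta(fg)=\mu$. The only delicate ingredients are the two potential-theoretic facts (that measures in the image of $\DDelta$ do not charge sets of zero capacity, and that $H^{1,2}$-convergence yields quasi-everywhere convergence of quasi-continuous representatives along subsequences); these are standard and will be quoted, and the rest is routine.
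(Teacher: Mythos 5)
Your proof is correct and follows essentially the same route as the paper: both hinge on the decomposition that moves the gradient onto the products $\varphi f$ and $\varphi g$ (producing the $2\nabla f\cdot\nabla g$ term) and on extending the integration-by-parts identity $\int\nabla w\cdot\nabla\varphi\dd\mass=-\int\tilde\varphi\dd\DDelta w$ to bounded Sobolev test functions via $\DDelta w\ll\capa$ and the quasi-continuity theory of \cite{debin2019quasicontinuous}. The only difference is the approximation scheme used for that extension (Lipschitz density in $H^{1,2}$ plus a quasi-everywhere convergent subsequence, versus the paper's heat-flow regularization $h_t f\to f$), which is immaterial.
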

	\begin{proof}
		Let $\varphi\in \LIP_{\rm bs}(\XX)$,
		\begin{equation}\label{dfavdsc1}
			\begin{split}
				\int_\XX \nabla (fg)\,\cdot\,\nabla \varphi\dd\mass &= \int_\XX f\nabla g\,\cdot\,\nabla \varphi\dd\mass+  \int_\XX g\nabla f\,\cdot\,\nabla \varphi\dd\mass \\&=\int_\XX \nabla g\,\cdot\,\nabla (\varphi f)\dd\mass+\int_\XX \nabla f\,\cdot\,\nabla ( \varphi g)\dd\mass-2\int_\XX \varphi\nabla f\,\cdot\,\nabla g\dd\mass.
			\end{split}
		\end{equation}
		Now,
		\begin{align*}
			\int_\XX \nabla g\,\cdot\,\nabla (\varphi f)\dd\mass=\lim_{t\searrow 0}\int_\XX\nabla g\,\cdot\, \nabla (\varphi\heat_t f)\dd\mass=-\lim_{t\searrow 0}\int_\XX\varphi\heat_t f\dd\DDelta g,
		\end{align*}
		and similarly for the second term at the right hand side of \eqref{dfavdsc1}. Now we can use dominated convergence, with the theory in \cite{debin2019quasicontinuous} (see in particular \cite[Proposition 2.13]{debin2019quasicontinuous}) and the fact that $\DDelta g\ll \capa$ and $\DDelta f\ll\capa$.
	\end{proof}
	
	\bigskip
	Now we  recall  the definition of the Ricci tensor on $\RCD$ spaces, $\Ric$,  given in \cite{Gigli14}:
	\begin{equation}\notag
		\DDelta\frac{|\nabla f|^2}{2}-\nabla f\,\cdot\,\nabla \Delta f=|\hess f|^2+\Ric(\nabla f,\nabla f)\quad\text{for every $f\in\mathrm{TestF}(\XX)$}.
	\end{equation}
	In \cite{Gigli14}, it has been proved that on an $\RCD(K,N)$ space,
	\begin{equation}\notag
		\Ric(\nabla f,\nabla f)\ge K |\nabla f|^2\mass\quad\text{for every $f\in\mathrm{TestF}(\XX)$},
	\end{equation}
	as measures.  In the case $N<\infty$ (which is the only case we are considering in this note), \cite{Han14} defined the modified Ricci tensor, $\Ric_N$, as follows: 
	\begin{equation}\notag
		\DDelta\frac{|\nabla f|^2}{2}-\nabla f\,\cdot\,\nabla \Delta f=|\hess f|^2+\Ric_N(\nabla f,\nabla f)+\frac{(\tr(\hess f )-\Delta f)^2}{N-n}\quad\text{for every $f\in\mathrm{TestF}(\XX)$},
	\end{equation}
	where  in this paper we denote by $n\le N$  the essential dimension of the space, \cite{bru2018constancy}. In the case $N=n$, then  the space is non-collapsed by \cite{H19,brena2021weakly} and hence $\tr(\hess(f))=\Delta f$, in particular, the convention that the fraction in the equation above is $0$ is meaningful.
	 In \cite{Han14}, it has been proved that on an $\RCD(K,N)$ space,
	\begin{equation}\label{improvedbochner}
	\Ric_N(\nabla f,\nabla f)\ge K |\nabla f|^2\mass\quad\text{for every $f\in\mathrm{TestF}(\XX)$},
\end{equation}
as measures.

Finally, we denote by $g$ the tensor representing the metric of the space, that is characterized as follows:
\begin{equation}
	g(\nabla f_1,\nabla f_2)=\nabla f_1\,\cdot\,\nabla f_2\quad\mass\text{-a.e.}
\end{equation}
for every $f_1,f_2\in H^{1,2}(\XX)$. Its existence follows from \cite{Gigli14}, using the fact that the space is infinitesimally Hilbertian.

\section{Bounds for solutions of the heat flow equation}

Now we go towards the proof of Theorem  \ref{grfeac}. We really need to prove only the last bound, which is a consequence of Hamilton's Lemma contained in the next section.
\subsection{Hamilton's Lemma}
The following lemma is the adaptation of \cite[Lemma 4.1]{Hamilton93} to our context (see also \cite[Theorem 1.2]{NITAM} regarding  the maximum principle used in \textbf{Step 3} of its proof). In adapting the arguments we have just mentioned, we face two main difficulties:  that we are considering possibly \emph{collapsed} $\RCD$ spaces (which correspond to the weighted case), and the \emph{non-smoothness} of $\RCD$ spaces. The first issue is  dealt with the improved Bochner inequality of \cite{Han14} and  linear algebra arguments, whereas to deal with the second issue delicate approximation arguments and careful computations are needed.
\begin{lem}\label{hamlem}
	Let $(\XX,\dist,\mass)$ be an $\RCD(K,N)$ space. Let $u\in L^1(\XX)\cap L^\infty(\XX)$ with $u\ge 0$, not vanishing identically. Set as in \cite{Hamilton93}
	\begin{equation}\notag
		v_t\defeq \frac{e^{K^- t}-1}{K^- e^{K^- t}}\bigg(\Delta h_t u+\frac{|\nabla h_t u|^2}{h_t u}\bigg)- h_t u\bigg(N+4\log\Big(\frac{\|u\|_{L^\infty}}{h_tu}\Big)\bigg).
	\end{equation}
	Then,  for every $t>0$,
	\begin{equation}\notag
		v_t\le 0\quad\mass\text{-a.e.}
	\end{equation}
\end{lem}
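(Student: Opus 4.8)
The plan is to adapt Hamilton's proof of \cite[Lemma 4.1]{Hamilton93}, with the three main adjustments dictated by the non-smoothness and possible collapsedness of $\XX$: replacing pointwise computations by computations with the measure-valued Laplacian $\DDelta$ applied to test-type functions, using the improved Bochner inequality \eqref{improvedbochner} of \cite{Han14} in place of the Riemannian Bochner formula, and justifying the parabolic maximum principle in the $\RCD$ setting (here is where \cite[Theorem 1.2]{NITAM} enters). I would first reduce to the case $u$ bounded away from $0$ and smooth-enough by a truncation/approximation argument: replacing $u$ by $u_\varepsilon = (u \wedge \|u\|_{L^\infty}) \vee \varepsilon\|u\|_{L^\infty}$ restricted to a large ball and mollified via the heat semigroup at a small time, so that $h_t u_\varepsilon$ becomes a positive element of the test class on a large ball, with $\log(\|u\|_{L^\infty}/h_tu)$ bounded; one then lets $\varepsilon \to 0$ at the end, using the Gaussian bounds of Proposition \ref{gaussbound} to control the error terms. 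Also note $v_t \le 0$ should be read with the convention that for $K \ge 0$ (so $K^-=0$) the prefactor $\frac{e^{K^-t}-1}{K^-e^{K^-t}}$ is interpreted as its limit $t$.

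\textbf{Main computation.} Write $h = h_t u$, $\ell = \log h$, and introduce $P_t \defeq t\big(\Delta h + \tfrac{|\nabla h|^2}{h}\big) - h\big(N + 4\log(\|u\|_{L^\infty}/h)\big)$ in the case $K=0$ (and the analogous $v_t$ for general $K$). The heart of the matter is to compute $(\partial_t - \Delta)$ applied to the quantity $v_t/h = $ (something like) $\frac{e^{K^-t}-1}{K^-e^{K^-t}}\big(\Delta \ell + 2|\nabla \ell|^2\big) - N - 4\log(\|u\|_{L^\infty}/h)$, i.e. work with $F_t \defeq \frac{e^{K^-t}-1}{K^-e^{K^-t}}(\Delta\ell + 2|\nabla\ell|^2) + 4\ell$ up to additive constants. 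Using $\partial_t \ell = \Delta\ell + |\nabla\ell|^2$, the commutation of $\partial_t$ with $\Delta$, and the Bochner identity in the form $\DDelta\tfrac{|\nabla\ell|^2}{2} - \nabla\ell\cdot\nabla\Delta\ell = |\hess\ell|^2 + \Ric_N(\nabla\ell,\nabla\ell) + \tfrac{(\tr\hess\ell - \Delta\ell)^2}{N-n} \ge (\text{nonneg.}) + \tfrac{(\Delta\ell)^2}{N}$ (the last bound coming from Cauchy--Schwarz $|\hess\ell|^2 \ge \tfrac{(\tr\hess\ell)^2}{n}$ combined with the linear-algebra inequality $\tfrac{a^2}{n} + \tfrac{(a-b)^2}{N-n} \ge \tfrac{b^2}{N}$, which is exactly the "linear algebra argument" handling the collapsed case), one arrives at a differential inequality of the shape
\begin{equation}\notag
(\partial_t - L)F_t \le 0,
\end{equation}
where $L$ is a drift-diffusion operator of the form $L\,\cdot = \Delta\,\cdot + 2\nabla\ell\cdot\nabla\,\cdot$, provided $F_t \le 0$; the coefficient $\frac{e^{K^-t}-1}{K^-e^{K^-t}}$ is precisely the one that makes the $K^-|\nabla\ell|^2$ error terms from the Bakry--Émery correction cancel. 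All of this must be done with $\DDelta$: one tests against $\varphi\in\LIP_{\mathrm{bs}}$, uses Lemma \ref{bgrfdva1} for products, and uses that the relevant functions lie in $D(\DDelta)\cap L^\infty \cap H^{1,2}$ after the approximation of the first paragraph.

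\textbf{Conclusion via the maximum principle.} Since $F_t \to -N - 4\cdot(\text{bounded below, in fact } \to -\infty$ like $-\log t$ is controlled$)$... more carefully: as $t \searrow 0$ one checks $F_t \to -\infty$ or at least $\limsup_{t\searrow 0} F_t \le 0$ using the small-time behavior $h_tu \le \|u\|_{L^\infty}$ so $\log(\|u\|_{L^\infty}/h_tu) \ge 0$, and the prefactor vanishes as $t\to 0$; combined with spatial decay at infinity from the Gaussian bounds, the parabolic maximum principle of \cite[Theorem 1.2]{NITAM} applied to the inequality $(\partial_t - L)F_t \le 0$ forces $F_t \le 0$ for all $t > 0$, hence $v_t \le 0$. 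I expect the \textbf{main obstacle} to be the rigorous implementation of the maximum principle: one must verify the required integrability/growth hypotheses of \cite{NITAM} for $F_t$ and for the drift $\nabla\ell = \nabla h/h$ on the whole space (the latter is where "$\RCD$ spaces lack bounded geometry" bites), and handle the fact that the differential inequality only closes on the region $\{F_t \le 0\}$ — so one runs the argument on a level set or with an auxiliary barrier, and then removes the truncation/approximation by a limiting argument controlled by Proposition \ref{gaussbound}. A secondary technical point is keeping all the second-order objects ($\hess\ell$, $\Delta\ell$, $\tr\hess\ell$) well-defined and the Bochner inequality applicable after the heat-semigroup regularization of $u$.
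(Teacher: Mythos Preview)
Your plan is essentially the paper's proof: approximation of $u$, improved Bochner \eqref{improvedbochner} plus the linear-algebra step $\tfrac{a^2}{n}+\tfrac{(a-b)^2}{N-n}\ge\tfrac{b^2}{N}$ to handle the collapsed case, then a Ni--Tam style maximum principle. Two points are worth correcting or sharpening.

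\textbf{A sign slip.} You write twice that the differential inequality ``closes on the region $\{F_t\le 0\}$''. It is the opposite: following Hamilton's three-case analysis one obtains $(\partial_t-\DDelta)v_t\le 0$ (equivalently $(\partial_t-L)F_t\le 0$) only on $\{v_t\ge 0\}$, i.e.\ on $\{F_t\ge 0\}$. This is exactly what is needed for the maximum principle, since $v_0=-u\big(N+4\log(\|u\|_{L^\infty}/u)\big)\le 0$; if the inequality held only where $F_t\le 0$ the argument would not close.

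\textbf{Two minor differences from the paper.} First, the paper works with $v_t$ itself rather than $F_t=v_t/h$, obtaining $(\partial_t-\DDelta)v_t\le 0$ on $\{v_t\ge 0\}$ with \emph{no drift term}; this avoids having to feed the drift $2\nabla\ell$ into the maximum principle. Second, the paper's approximation is simply $u=h_\sigma w$ with $w\in\LIP_{\mathrm{bs}}\cap D(\Delta)$, $\Delta w\in H^{1,2}\cap L^\infty$ (no lower truncation by $\varepsilon$): the Gaussian lower bound then gives $h_tu>0$ everywhere and, crucially, yields the explicit spatial growth $|\nabla h_tu|/h_tu\le C\exp\big(\tfrac{2}{\sigma}\dist^2(\,\cdot\,,\bar x)\big)$, which is exactly the control needed to run the Ni--Tam argument. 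The paper then carries out that argument by hand: testing $\DDelta v_t$ against $\varphi^2 e^{g_t}v_t^+$ with the Gaussian weight $g_t=-\dist^2(\,\cdot\,,\bar x)/(4(2T_1-T_2-t))$ satisfying $|\nabla g_t|^2+\partial_t g_t=0$, and iterating on time intervals of uniform length $\Delta_0$. You correctly identify this step as the main obstacle; the paper's implementation is self-contained rather than a black-box citation of \cite{NITAM}.
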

\begin{proof}
	By an immediate approximation argument, we see that it is enough to work with  $u=h_\sigma w$, for some $w\in \LIP_{\mathrm{bs}}(\XX)\cap D(\Delta)$ non-negative with $\Delta w \in H^{1,2}\cap L^\infty$ and $\sigma>0$. Fix $T>0$.
	\medskip\\\textbf{Step 1.} We begin with a few technical estimates, which we will use throughout.
	First, 
	\begin{equation}\label{brrvfedac}
		\sup_{t\in [0,T]}\|\Delta h_t u\|_{H^{1,2}\cap L^\infty}+	\| h_t u\|_{L^2\cap L^\infty}+\|\nabla h_t u\|_{L^2\cap L^\infty}+ \|h_t u\log(h_t u)\|_{L^2\cap L^\infty}<\infty
	\end{equation}
	by the properties of the heat flow. 
	Now, notice that $v_t\in L^2_{\mathrm{loc}}$ is locally absolutely continuous as a function of $t\in [0,\infty]$, and differentiable  at every $t\ge 0$ in the sense that for every bounded set $B$, $t\mapsto v_t\chi_B \in L^2$ has these properties.
	Also, we have the estimates, for every $B\subseteq\XX$ bounded set,
	\begin{equation}\label{veacfd1}
		\sup_{t\in [0,T]} \|\partial_t v\|_{L^2(B)}+\|v_t\|_{H^{1,2}(B)\cap L^\infty(B)}<\infty.
	\end{equation}
	This is obtained by explicit differentiation, taking into account that $h_t u$ is uniformly bounded from below on $[0,T]\times B$. 
	
	Also, for every $t\ge 0$, $v_t\in D(\DDelta)$, where we are slightly abusing the notation, as $\DDelta v_t$ is a Radon measure that can be not finite.
		
		Next, we claim that for some $\bar x\in\XX$, for every $t\in [0,T]$, if $C$  is a constant that depends on $\XX, T, \sigma$ and $w$, and may vary line to line, 
		\begin{equation}\label{cveadfsc}
			\frac{|\nabla h_t u|(x)}{h_t u(x)}\le C e^{\frac{2}{\sigma}\dist^2(x,\bar x)}\quad\text{for }\mass\text{-a.e.\ $x$}.
		\end{equation}
		Indeed,  we let $\bar B=B_{\bar r}(\bar x)$ be a  ball  of minimal radius such that $\mathrm{supp}(w)\subseteq \bar B$.  
		Notice that $h_t u$ is uniformly bounded from below on $[0,T]\times 2\bar B$, and $|\nabla h_t u|$ is uniformly bounded from above, hence \eqref{cveadfsc} is satisfied on $2 \bar B$.
		Now, notice that
		\begin{equation}\notag
			\dist(x,\bar x)/2\le \dist (x,y)\le 2\dist(x,\bar x)\quad\text{for every $x\in\XX\setminus 2\bar B$ and  $y\in\bar B$}.
		\end{equation}
		Therefore, for any $t\ge 0$, using \cite[Corollary 4.3]{Savare13} with the upper Gaussian bound for the heat kernel,
		\begin{equation}\notag
			|\nabla h_t u|(x)\le C h_{t+\sigma} \chi_{\bar B}\le  \frac{C}{\mass(B_{\sqrt{t+\sigma}}(x))}\int_{\bar B} e^{-\frac{\dist^2(x,y)}{5(t+\sigma)}}\dd\mass(y)\le \frac{C}{\mass(B_{\sqrt{t+\sigma}}(x))} e^{-\frac{\dist^2(x,\bar x)}{20(t+\sigma)}},
		\end{equation} 
		and similarly, using the lower Gaussian bound for the heat kernel, for some $\bar B'\subseteq\bar B$ (depending on $w$),
		\begin{equation}\notag
			h_t u(x)\ge \frac{1}{C \mass(B_{\sqrt{t+\sigma}}(x))}\int_{\bar B'} e^{-\frac{\dist^2(x,y)}{3(t+\sigma)}}\dd\mass(y)\ge \frac{1}{C \mass(B_{\sqrt{t+\sigma}}(x))}e^{-4\frac{\dist^2(x,\bar x)^2}{3(t+\sigma)}}.
		\end{equation}
		The two above equation conclude the proof of \eqref{cveadfsc}.	
		\medskip\\\textbf{Step 2}. We prove that
		\begin{equation}\label{subsolution}
			\partial_t v_t\le \DDelta v_t\quad\text{on $\{v_t\ge 0\}$},
		\end{equation}
		in the sense that $\partial_t v_t- \DDelta v_t$ is bounded by above by an absolutely continuous measure that vanishes on $\{v_t\ge 0\}$.
		
		Set $\psi(t)\defeq \frac{e^{K^- t}-1}{K^- e^{K^-t}}$, notice that $\psi'+K^-\psi=1$.
		Fix $t\ge 0$. We start by noticing that by Lemma \ref{bgrfdva1} and a localization argument,
		\begin{align*}
			(	\DDelta-\partial_{t})\frac{|\nabla h_t u|^2}{h_t u}&=\frac{2}{h_t u}\bigg(\frac{\DDelta|\nabla h_t u|^2-2\nabla h_t u\,\cdot\,\nabla\Delta h_t u}{2}+\frac{|\nabla h_t u|^4}{h_t u^2}-\frac{\nabla h_t u\,\cdot\,\nabla |\nabla h_t u|^2}{h_t u}\bigg)\\&\ge\frac{2}{h_t u}\bigg( |\hess h_t u|^2+\frac{(\tr(\hess h_t u)-\Delta h_t u)^2}{N-n}+K|\nabla h_t u|^2+\frac{|\nabla h_t u|^4}{h_t u^2}\\
			&\quad\quad\quad\quad\quad\quad\quad\quad\quad\quad-\frac{2\hess h_t u(\nabla h_t u\otimes\nabla h_t u)}{h_t u}\bigg),
		\end{align*} 
		where the inequality follows from \eqref{improvedbochner}.
		
		Now we claim that 
		\begin{align*}
			&|\hess h_tu|^2+\frac{(\tr(\hess h_tu)-\Delta h_tu)^2}{N-n}+\frac{|\nabla h_tu|^4}{h_t u^2}-\frac{2\hess h_tu(\nabla h_t u\otimes\nabla h_tu)}{h_tu}\\&\quad\ge \frac{1}{N}\Big(\Delta h_tu-\frac{|\nabla h_tu|^2}{h_tu}\Big)^2\quad\text{$\mass$-a.e.}
		\end{align*}
		By arguing locally and using an orthonormal basis of $L^2(T\XX)$, we can prove the above by means of linear algebra. Indeed, using orthonormal local coordinates and fixing $\mass$-a.e.\ $x\in\XX$,  we can set set $A\defeq \hess h_tu (x)\in \RR^{n\times n}$, $\ell\defeq \frac{\nabla h_tu}{\sqrt {h_tu}}(x)\in \RR^n$, $\lambda\defeq\Delta h_tu(x)\in\RR$. It remains then to prove that
		\begin{equation}\notag
			|A|^2+\frac{(\tr(A)-\lambda)^2}{N-n}+|\ell|^4-2A\,\cdot\,(\ell\otimes \ell)\ge \frac{1}{N}(\lambda-|\ell|^2)^2,
		\end{equation}
		i.e.\ 
		\begin{equation}\notag
			N|A-\ell\otimes \ell|^2+\frac{N}{N-n}{(\tr(A)-\lambda)^2}\ge( \lambda-|\ell|^2)^2.
		\end{equation}
		We now use Cauchy--Schwarz to prove 
		\begin{equation}\notag
			N|A-\ell\otimes \ell|^2\ge \frac{N}{n}(\tr(A)-|\ell|^2)^2
		\end{equation}
		whence the claim by the inequality $(a+b)^2\le pa^2+q b^2$ whenever $a,b>0$ and $p^{-1}+q^{-1}=1$.
		
		We thus conclude that 
		\begin{equation}\notag
			(	\partial_{t}-\DDelta)\frac{|\nabla h_t u|^2}{h_t u}\le - \frac{2}{N h_t u}\Big(\Delta h_tu-\frac{|\nabla h_tu|^2}{h_tu}\Big)^2+ 2K^-\frac{|\nabla h_t u|^2}{h_t u}.
		\end{equation}
		Now we can follow \cite{Hamilton93}. By direct computation, using Lemma \ref{bgrfdva1} and a localization  argument, recalling the identity  $\psi'+K^-\psi=1$,
		\begin{equation}\notag
			(\partial_t-\DDelta )v\le -\frac{2\psi}{N h_t u}\Big(\Delta h_t u-\frac{|\nabla h_t u|^2}{h_t u}\Big)^2+\psi' \Big(\Delta h_t u-\frac{|\nabla h_t u|^2}{h_t u}\Big)-2\frac{|\nabla h_t u|^2}{h_t u},
		\end{equation}
		in the sense of measures. Notice that the right hand side is a measure given by a function, we claim that it is non-positive $\mass$-a.e.\ on $\{v_t\ge 0\}$ and this will conclude the proof of \eqref{subsolution}. This follows exactly as in \cite{Hamilton93}, we give the argument for completeness. We have to distinguish three cases, notice that $A_1\cup A_2\cup A_3 \supseteq \{v_t\ge 0\}$.
		\begin{enumerate}
			\item On $A_1\defeq\big\{\Delta h_t u\le \frac{|\nabla h_t u|^2}{h_t u}\big\}$. Just notice that $\psi'\ge 0$.
			\item On $A_2\defeq \big\{\frac{|\nabla h_t u|^2}{h_t u}\le \Delta h_t u\le3 \frac{|\nabla h_t u|^2}{h_t u} \big\}$. This follows as $\psi'\le 1$.
			\item On $A_3\defeq  \big\{3 \frac{|\nabla h_t u|^2}{h_t u}\le \Delta h_t u \big\}\cap \{v_t\ge 0\}$.  First, 
			\begin{equation}\notag
				N h_t u\le \psi \Big(\Delta h_t u+\frac{|\nabla h_t u|^2}{h_t u}\Big)\le2 \psi \Big( \Delta h_t u-\frac{|\nabla h_t u|^2}{h_t u}\Big)\quad\text{{$\mass$-a.e.\ on $A_3\cap \{v_t\ge 0\}$, }}
			\end{equation}
			where the first inequality follows as $v_t \ge 0$, and the second one as $\Delta h_t u\ge 3 \frac{|\nabla h_t u|^2}{h_t u}$, and so the conclusion follows as $\psi'\le 1$.
		\end{enumerate}
		\medskip\textbf{Step 3}. We conclude by using the maximum principle adapted from \cite{NITAM}. Take $T_1,T_2\in [0,T)$, with $T_1>T_2$, we will consider $t\in [T_2,T_1]$. Let $\varphi\in\LIP_{\rm bs}(\XX)$. Define also $g_t(x)\defeq -\frac{\dist^2(x,\bar x)}{4(2T_1-T_2-t)}$, notice that 
		\begin{equation}\label{bvfeacds}
			|\nabla {g_t} |^2+\partial_t {g_t} = 0\quad\mass\text{-a.e.}
		\end{equation}
		and that $e^{g_t} \in \LIP_{\mathrm {b}}(\XX)$ with
		\begin{equation}\notag
			\sup_{t\in [0,T_1]} \||\nabla e^{g_t} |\|_{L^\infty}<\infty.
		\end{equation}
		
		Take any $t\ge 0$, and write $\mu\defeq(\DDelta-\partial_t) v_t=f_t\mass+\mu^s_t$, where $\mu^s_t\perp\mass$. By \textbf{Step 2}, $f_t\ge 0\ \mass$-a.e.\ on $\{v_t\ge 0\}$, and we see that also $\mu_t^s\ge 0$, in the sense that it is a non-negative measure (by the Bochner inequality, the already used \eqref{improvedbochner}  gives a   precise version).  
		Hence, we can write
		\begin{equation}\label{vfadvefad}
			0\le  \int_\XX \varphi^2 e^{g_t}  v^+_t\dd \DDelta v_t-   \int_\XX \varphi^2 e^{g_t}  v^+_t \partial_t v_t\dd\mass.
		\end{equation}
		Some comments are in order. For the first integral, we took the quasi-continuous representative for $v_t^+$, which is possible as $v_t^+\in H^{1,2}_{\mathrm{loc}}$.  Any other $\capa$-measurable representative of $v_t^+$ would anyway work in \eqref{vfadvefad}, but we need the following property
		\begin{equation}\notag
			\int_\XX \varphi^2 e^{g_t}  v^+_t\dd \DDelta v_t=-\int_\XX \nabla (\varphi^2 e^{g_t}  v^+_t)\,\cdot\,\nabla v_t\dd\mass,
		\end{equation}
		which can be proved by an immediate approximation argument based on the theory of \cite{debin2019quasicontinuous}, as in the proof of Lemma \ref{bgrfdva1}.
		
		Hence,
		\begin{equation}\label{vfadvefd}
			0\le 	-\int_\XX \nabla (\varphi^2 e^{g_t}  v^+_t)\,\cdot\,\nabla v_t\dd\mass-  \int_\XX \varphi^2 e^{g_t}  v^+_t \partial_t v_t\dd\mass.
		\end{equation}
		For what concerns the first integral in \eqref{vfadvefd},
		\begin{align*}
			-	\int_\XX \nabla (\varphi^2 e^{g_t}  v^+_t)\,\cdot\,\nabla v_t\dd\mass&=-	\int_\XX \varphi^2 e^{g_t}  |\nabla v^+_t|^2+\varphi^2 e^{g_t}  v_t^+\nabla {g_t} \,\cdot\,\nabla v_t+2\varphi e^{g_t}  v_t^+\nabla \varphi\,\cdot\,\nabla v_t\dd\mass
			\\&\le 2\int_\XX e^{g_t} (v_t^+)^2|\nabla \varphi|^2\dd\mass+\frac{1}{2} \int_\XX\varphi^2 e^{g_t}  (v_t^+)^2|\nabla g_t|^2\dd\mass.
		\end{align*}
		Now we manipulate the second integral. Let $\chi(s)$ be any $C^1$ function that satisfies $\chi(s)=0$ for $s\le 0$, $\chi(s)=s$ for $s\ge 1$, let then, for $\epsilon\in(0,1)$, $\chi_\epsilon(s)\defeq \epsilon\chi(t/\epsilon)$. Notice that $(\chi_\epsilon)_{\epsilon\in (0,1)}$ are equi-Lipschitz. By direct computation, we see that $t\mapsto\chi_\epsilon\circ v_t\in L^2_{\mathrm{loc}}$ and $t\mapsto(\chi_\epsilon\circ v_t)^2\in L^2_{\mathrm{loc}}$ are differentiable at any $t\ge 0$, with derivatives, respectively, $\chi_\epsilon'\circ v_t \partial_t v_t$ and $2\chi_\epsilon \circ v_t \chi_\epsilon'\circ v_t \partial_t v_t$. Now we compute
		\begin{equation}\notag
			\frac{1}{2}\int_\XX \varphi^2 e^{g_t}\partial_t (\chi_\epsilon\circ v_t)^2\dd\mass=\int_\XX \varphi^2 e^{g_t}\chi_\epsilon\circ v_t\chi_\epsilon'\circ v_t\partial_t v_t\dd\mass
		\end{equation}
		and we deduce by  \eqref{veacfd1} that 
		\begin{equation}\label{vfaedcs}
			\sup_{t\in [0,T_1], \ \epsilon\in (0,1)}\bigg| 	\frac{1}{2}\int_\XX \varphi^2 e^{g_t}\partial_t (\chi_\epsilon\circ v_t)^2\dd\mass\bigg|<\infty
		\end{equation}
		and also that 
		\begin{equation}\notag
			\lim_{\epsilon\searrow 0} 	\frac{1}{2}\int_\XX \varphi^2 e^{g_t}\partial_t (\chi_\epsilon\circ v_t)^2\dd\mass=\int_\XX \varphi^2  e^{g_t} v_t\chi_{\{v_t\ge 0\}} \chi_{\{v_t>0\}}\partial_t v_t\dd\mass=\int_\XX \varphi^2 e^{g_t}v_t^+\partial_t v_t\dd\mass.
		\end{equation}
		By what we have observed above, starting from \eqref{vfadvefd},
		\begin{equation}\notag
			0\le 2\int_\XX e^{g_t} (v_t^+)^2|\nabla \varphi|^2\dd\mass+\frac{1}{2} \int_\XX\varphi^2 e^{g_t}  (v_t^+)^2|\nabla g_t|^2\dd\mass -\lim_{\epsilon\searrow 0}\frac{1}{2}\int_\XX \varphi^2 e^{g_t}\partial_t (\chi_\epsilon\circ v_t)^2\dd\mass.
		\end{equation}
		Using \eqref{veacfd1} and \eqref{vfaedcs}, we can integrate the above on $(T_2,T_1)$ and use dominated convergence to deduce that
		\begin{equation}\label{vefadvac}
\begin{split}
				0&\le 2\int_{T_2}^{T_1}\int_\XX e^{g_t} (v_t^+)^2|\nabla \varphi|^2\dd\mass\dd t+\frac{1}{2}\int_{T_2}^{T_1} \int_\XX\varphi^2 e^{g_t}  (v_t^+)^2|\nabla g_t|^2\dd\mass\dd t \\&\quad\quad-\lim_{\epsilon\searrow 0}\frac{1}{2}\int_{T_2}^{T_1}\int_\XX \varphi^2 e^{g_t}\partial_t (\chi_\epsilon\circ v_t)^2\dd\mass\dd t.
				\end{split}
		\end{equation}
		
		We manipulate further the last integral.
		By an integration by parts, exploiting the fact that for every $t\ge 0$ and bounded set $B$, $t\mapsto e^{g_t}\chi_B\in L^\infty$ is differentiable,
		\begin{align*}
			\int_{T_2}^{T_1}\int_\XX \varphi^2 e^{g_t}\partial_t (\chi_\epsilon\circ v_t)^2\dd\mass\dd t&=\int_{T_2}^{T_1} \partial_t \int_\XX \varphi^2 e^{g_t} (\chi_\epsilon\circ v_t)^2\dd\mass\dd t-\int_{T_2}^{T_1} \int_\XX\varphi^2 \partial_t e^{g_t} (\chi_\epsilon\circ v_t)^2\dd\mass\dd t
			\\&= \int_\XX \varphi^2 e^{g_{T_1}}(\chi_\epsilon \circ v_{T_1})^2\dd\mass- \int_\XX \varphi^2 e^{g_{T_2}}(\chi_\epsilon \circ v_{T_2})^2\dd\mass \\&\quad\quad-\int_{T_2}^{T_1} \int_\XX \varphi^2 \partial_t e^{g_t} (\chi_\epsilon\circ v_t)^2\dd\mass\dd t.
		\end{align*}
		Assume for the moment that $v_{T_2}\le 0\ \mass$-a.e. 
		If we let $\epsilon\searrow 0$, keeping in mind \eqref{veacfd1} and the assumption that we have just made,
		\begin{equation}\notag
			\lim_{\epsilon_\searrow 0} \int_{T_2}^{T_1}\int_\XX \varphi^2 e^{g_t}\partial_t (\chi_\epsilon\circ v_t)^2\dd\mass\dd t=\int_\XX \varphi^2 e^{g_{T_1}} (v_{T_1}^+)^2\dd\mass-\int_{T_2}^{T_1} \int_\XX\varphi^2 \partial_t e^{g_t} (v_t^+)^2\dd\mass \dd t.
		\end{equation}
		Inserting this in \eqref{vefadvac},
		\begin{align*}
			\frac{1}{2}\int_\XX \varphi^2 e^{g_{T_1}} (v_{T_1}^+)^2\dd\mass&
			\le 2\int_{T_2}^{T_1}\int_\XX e^{g_t} (v_t^+)^2|\nabla \varphi|^2\dd\mass\dd t\\
			&\quad	
			+\frac{1}{2}\int_{T_2}^{T_1} \int_\XX\varphi^2 e^{g_t}  (v_t^+)^2|\nabla g_t|^2\dd\mass\dd t +\frac{1}{2}\int_{T_2}^{T_1} \int_\XX\varphi^2 \partial_t e^{g_t} (v_t^+)^2\dd\mass\dd t\\
			&=  2\int_{T_2}^{T_1}\int_\XX e^{g_t} (v_t^+)^2|\nabla \varphi|^2\dd\mass\dd t,
		\end{align*}
		where we used \eqref{bvfeacds} for the last equality.
		
		Now we choose, for $R>1$, $\varphi(x)=\varphi_R(x)=(1\wedge (R+1-\dist(x,\bar x)))\vee 0$. Hence, by the above,
		\begin{equation}\notag
			\int_{B_{R}(\bar x)} e^{g_{T_1}}(v_{T_1}^+)^2\dd\mass\le 4\int_{T_2}^{T_1} \int_{B_{R+1}(\bar x)\setminus B_{R}(\bar x)} e^{-\frac{\dist^2(x,\bar x)}{8{(T_1-T_2)}}} (v_t)^2\dd\mass\dd t.
		\end{equation}
		Now, thanks to \eqref{brrvfedac} and \eqref{cveadfsc},  we see that, if $T_1-T_2\le \Delta_0$, where $\Delta_0$ is small enough (not depending on $T_1$ and $T_2$), we have that $e^{-\frac{\dist^2(x,\bar x)}{8(T_1-T_2)}} (v_t)^2$ is uniformly bounded in $L^1$, so that we can let $R\nearrow \infty$ in  the above equation, together with dominated convergence, to deduce that $\int_\XX e^{g_{T_1}} (v_{T_1}^+)^2\dd\mass=0$, that is, $v_{T_1}\le 0\ \mass$-a.e. 
		
		Recall that in the paragraph we have assumed that $v_{T_2}\le 0\ \mass$-a.e. We can then plug in $T_2=0$, as $v_0=-u \bigg(N+4\log\Big(\frac{\|u\|_{L^\infty}}{u}\Big)\bigg)\le 0\ \mass$-a.e.\ and obtain that  $v_{\Delta_0}\le 0\ \mass$-a.e. We can then repeat the argument, starting at $T_2=\Delta_0$ and then finally iterate,  to reach the conclusion.
	\end{proof}

	\subsection{Proof of Theorem \ref{grfeac}}
		Item $\it (1)$ is  \cite[Theorem 1.2]{JiangZhang16}. We improved the statement from ``for $\mass\otimes\mathcal{L}^1$-a.e.\ $(x,t)$'' to ``for every $t>0$, for $\mass$-a.e.\ $x$'' thanks to an immediate continuity argument.
		Item $\it (2)$ is  \cite[Theorem 1.2]{Jiang15}, or  \cite{GarofaloMondino14} in the  case of finite mass.
		
		We prove now item $\it (3)$. We claim that the conclusion follows from Lemma \ref{hamlem}    (and the fact that $s\le e^s-1$). Indeed, take $(u_k)_k\subseteq L^1\cap L^\infty$ such that $u_k\rightarrow u$ in $L^q$, with $\|u_k\|_{L^\infty}\le \|u\|_{L^\infty}$. Hence, by Lemma \ref{hamlem}, the claim holds for $u_k$, for any $t>0$.  Now fix $t>0$ and notice that $\Delta h_t u_k\rightarrow \Delta h_t u$ in $L^q$, as a consequence of  the Gaussian bound on the heat kernel (see e.g.\  \cite[Theorem 3.3]{Jiang15}). Moreover,   $(h_t u_k)_k$ locally uniformly converges to the (strictly) positive function $h_t u$. Then, \eqref{veafdcs}  holds also for $u$ by approximation.\qed

	\subsection{Proof of Theorem \ref{hkest}}
	Before proving Theorem \ref{hkest}, we need a technical proposition which describes the  derivatives of the function $f_t$.

	\begin{prop}\label{prop28}
		Let  $(\XX,\dist,\mass)$ be an $\RCD(K,N)$ space. Then, for every $x\in\XX$ and $t>0$, $f_t(x,\,\cdot\,)\in D_{\mathrm{loc}}(\Delta)$, and the following equalities hold, 
		\begin{align*}
			&\nabla_y f_t(x,t)=-\frac{\nabla_y \rho_t(x,y)}{\rho_t(x,y)},\\
			&\Delta_y f_t(x,y)=-\frac{\Delta_y\rho_t(x,y)}{\rho_t(x,y)}+\frac{|\nabla_y \rho_t|^2(x,y)}{\rho_t^2(x,y)}=-\frac{\Delta_y\rho_t(x,y)}{\rho_t(x,y)}+|\nabla_y f_t|^2(x,y),
		\end{align*}
		for $\mass$-a.e.\ $y\in\XX$. 

 Moreover, for every $x\in\XX$ and $t>0$, the curve $t\mapsto f_t(x,\,\cdot\,)\in L^2_{\mathrm{loc}}(\XX)$ is locally absolutely continuous in $(0,\infty)$ and is differentiable at every $t>0$,  in the sense that for every bounded set $B$, $t\mapsto f_t(x,\,\cdot\,)\chi_B$ has the properties just stated. Denoting with the dot the derivative in time with respect to the $y$ variable, we have that for every $x\in\XX$ and $t>0$, $\dot f_t(x,y)\in  D_{\mathrm{loc}}(\Delta)$ with
		\begin{align*}
			\dot{f_t}(x,y)&=-\frac{\dot{\rho}_t(x,y)}{\rho_t(x,y)}-\frac{N}{2t}=-\frac{\Delta_y \rho_t(x,y)}{\rho_t(x,y)}- \frac{N}{2t}=\Delta _yf_t(x,y)-|\nabla_y f_t|^2(x,y)-\frac{N}{2t},\\
			\nabla_y \dot f_t(x,y)&=-\frac{\nabla_y\Delta_y\rho_t(x,y)}{\rho_t(x,y)}+\frac{\Delta_y\rho_t(x,y)\nabla_y\rho_t(x,y)}{\rho_t^2(x,y)},\\
			\Delta_y \dot f_t {(x,y)}&=-\frac{\Delta_y\Delta_y\rho_t{(x,y)}}{\rho_t{(x,y)}}+\frac{(\Delta_y\rho_t{(x,y)})^2}{\rho_t^2{(x,y)}}+2\frac{\nabla_y\Delta_y\rho_t{(x,y)}\,\cdot\,\nabla_y \rho_t{(x,y)}}{\rho_t^2{(x,y)}}\\&\quad\quad-2\frac{\Delta_y\rho_t{(x,y)}|\nabla_y\rho_t|^2{(x,y)}}{\rho_t^3{(x,y)}},
		\end{align*}
		for $\mass$-a.e.\ $y\in\XX$. 
		
		Finally, for every $x\in\XX$ and $t>0$, the curve $t\mapsto \nabla_y f_t(x,\,\cdot\,)\in L^2_{\mathrm{loc}}(T\XX)$ is locally absolutely continuous in $(0,\infty)$ and is differentiable at every $t>0$, with 
		\begin{equation}\notag
			\partial_t \nabla_y f_t(x,y)=\nabla_y\dot f_t(x,y)\quad\text{for $\mass$-a.e.\ $y\in\XX$},
		\end{equation}
		and the curve $t\mapsto \Delta_y f_t(x,\,\cdot\,)\in L^2_{\mathrm{loc}}(\XX)$ is locally absolutely continuous in $(0,\infty)$ and is differentiable at every $t>0$, with 
\begin{equation}\notag
	\partial_t \Delta_y f_t(x,y)=\Delta_y\dot f_t(x,y)\quad\text{for $\mass$-a.e.\ $y\in\XX$}.
\end{equation}
	\end{prop}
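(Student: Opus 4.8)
The plan is to derive the whole statement from the chain rule for the Laplacian, the Leibniz rule (Lemma \ref{bgrfdva1}), and elementary rules for differentiating quotients of $L^2_{\mathrm{loc}}$-valued curves in $t$; the only substantial work is to assemble the right regularity package for the heat kernel, and I expect this to be the main obstacle. Concretely, I would fix $x\in\XX$ and record, for every bounded $B\subseteq\XX$ and every $0<a<b$, that: (i) by joint continuity and strict positivity of the heat kernel there are $0<c\le\rho_t(x,y)\le C$ on $[a,b]\times B$; (ii) by the regularizing identity $\rho_t(x,\,\cdot\,)=\heat_s\rho_{t-s}(x,\,\cdot\,)$ for $0<s<t$, the smoothing properties of $\heat_s$ (which maps $L^2$ into $D(\Delta^k)$ for all $k$, with quantitative gradient bounds as in \cite[Corollary 4.3]{Savare13}), and the Gaussian bounds of Proposition \ref{gaussbound}, each of $\rho_t(x,\,\cdot\,)$, $\nabla_y\rho_t(x,\,\cdot\,)$, $\Delta_y\rho_t(x,\,\cdot\,)=\partial_t\rho_t(x,\,\cdot\,)$, $\nabla_y\Delta_y\rho_t(x,\,\cdot\,)$ and $\Delta_y\Delta_y\rho_t(x,\,\cdot\,)=\partial_t^2\rho_t(x,\,\cdot\,)$ belongs to $D_{\mathrm{loc}}(\Delta)\cap H^{1,2}_{\mathrm{loc}}\cap L^\infty_{\mathrm{loc}}$, with the relevant norms on $B$ bounded uniformly for $t\in[a,b]$; and (iii) by analyticity of the heat semigroup together with the commutation of $\nabla$ and $\Delta$ with $\heat_t$, the curves $t\mapsto\rho_t(x,\,\cdot\,)\chi_B$, $t\mapsto\nabla_y\rho_t(x,\,\cdot\,)\chi_B$ and $t\mapsto\Delta_y\rho_t(x,\,\cdot\,)\chi_B$ are locally absolutely continuous in $(0,\infty)$ and differentiable at every $t>0$, valued respectively in $L^2(B)$, $L^2(T\XX|_B)$ and $L^2(B)$, with derivative obtained by applying one further $\Delta_y$ (equivalently $\partial_t$). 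Obtaining (ii)--(iii) simultaneously --- spatial $D_{\mathrm{loc}}(\Delta)\cap H^{1,2}_{\mathrm{loc}}\cap L^\infty_{\mathrm{loc}}$ regularity of $\rho_t(x,\,\cdot\,)$ and of its iterated space--time derivatives, everywhere-in-time differentiability, and uniform-on-compacts control --- is the delicate point; the rest is bookkeeping.

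Granting this, I would fix $x$ and $t$ and write $f_t(x,\,\cdot\,)=\Phi(\rho_t(x,\,\cdot\,))-\tfrac N2\log t-\tfrac N2\log(4\pi)$ with $\Phi(s)=-\log s$, which is $C^2$ on a compact neighbourhood of $[c,C]\subseteq(0,\infty)$ containing the range of $\rho_t(x,\,\cdot\,)$ on a given bounded $B$. The (localized) chain rule for the Laplacian --- applicable since $\rho_t(x,\,\cdot\,)\in D_{\mathrm{loc}}(\Delta)$ and $|\nabla_y\rho_t|\in L^\infty_{\mathrm{loc}}$ by (i)--(ii) --- then gives $f_t(x,\,\cdot\,)\in D_{\mathrm{loc}}(\Delta)$ together with $\nabla_y f_t=\Phi'(\rho_t)\nabla_y\rho_t=-\nabla_y\rho_t/\rho_t$ and $\Delta_y f_t=\Phi'(\rho_t)\Delta_y\rho_t+\Phi''(\rho_t)|\nabla_y\rho_t|^2=-\Delta_y\rho_t/\rho_t+|\nabla_y\rho_t|^2/\rho_t^2=-\Delta_y\rho_t/\rho_t+|\nabla_y f_t|^2$, which is the first block of identities.

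For the time regularity I would compose once more: by (iii), $t\mapsto\rho_t(x,\,\cdot\,)\chi_B\in L^2(B)$ is locally absolutely continuous, differentiable at every $t>0$ with derivative $\dot\rho_t=\Delta_y\rho_t$, and valued in $[c,C]$ on each $[a,b]$; since $\Phi$ is Lipschitz there, the chain rule for differentiation of $L^2$-valued curves (and subtracting $\tfrac N2\log t$) shows that $t\mapsto f_t(x,\,\cdot\,)\chi_B$ has the same properties with $\dot f_t=-\dot\rho_t/\rho_t-\tfrac N{2t}=-\Delta_y\rho_t/\rho_t-\tfrac N{2t}$, which by the previous paragraph equals $\Delta_y f_t-|\nabla_y f_t|^2-\tfrac N{2t}$. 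To differentiate $\dot f_t$ in the space variable I would write $\dot f_t+\tfrac N{2t}=-\Delta_y\rho_t\cdot(1/\rho_t)$, apply the chain rule to $1/\rho_t=\Psi(\rho_t)$ with $\Psi(s)=1/s$ (of class $C^2$ on $[c,C]$) to get $1/\rho_t\in D_{\mathrm{loc}}(\Delta)\cap H^{1,2}_{\mathrm{loc}}\cap L^\infty_{\mathrm{loc}}$ with $\nabla_y(1/\rho_t)=-\nabla_y\rho_t/\rho_t^2$ and $\Delta_y(1/\rho_t)=-\Delta_y\rho_t/\rho_t^2+2|\nabla_y\rho_t|^2/\rho_t^3$, and then apply the localized form of Lemma \ref{bgrfdva1} to the product $\Delta_y\rho_t\cdot(1/\rho_t)$, both factors lying in $D_{\mathrm{loc}}(\Delta)\cap H^{1,2}_{\mathrm{loc}}\cap L^\infty_{\mathrm{loc}}$ by (ii). Substituting yields $\dot f_t(x,\,\cdot\,)\in D_{\mathrm{loc}}(\Delta)$ with $\nabla_y\dot f_t=-\nabla_y\Delta_y\rho_t/\rho_t+\Delta_y\rho_t\,\nabla_y\rho_t/\rho_t^2$ and with $\Delta_y\dot f_t$ equal to the claimed four-term expression; the identity $\dot f_t=-\dot\rho_t/\rho_t-\tfrac N{2t}$ and the heat equation $\dot\rho_t=\Delta_y\rho_t$ also give the remaining rewritings of $\dot f_t$.

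Finally, to identify $\partial_t\nabla_y f_t$ and $\partial_t\Delta_y f_t$ (and to get local absolute continuity and everywhere-differentiability of $t\mapsto\nabla_y f_t(x,\,\cdot\,)\chi_B$ and $t\mapsto\Delta_y f_t(x,\,\cdot\,)\chi_B$), I would differentiate in $t$ the explicit formulas $\nabla_y f_t=-\nabla_y\rho_t/\rho_t$ and $\Delta_y f_t=-\Delta_y\rho_t/\rho_t+|\nabla_y\rho_t|^2/\rho_t^2$ directly: by (ii)--(iii) the curves $t\mapsto\rho_t$, $\nabla_y\rho_t$, $\Delta_y\rho_t$, $\nabla_y\Delta_y\rho_t$, $\Delta_y\Delta_y\rho_t$ (restricted to $B$) are locally absolutely continuous, differentiable at every $t>0$ with the expected derivatives, uniformly bounded on $[a,b]$, and $\rho_t$ is bounded below there, so the quotients are differentiable in $L^2(B)$, resp.\ in $L^2(T\XX|_B)$, by the Leibniz and quotient rules, with derivatives $-\nabla_y\Delta_y\rho_t/\rho_t+\Delta_y\rho_t\,\nabla_y\rho_t/\rho_t^2$ and $-\Delta_y\Delta_y\rho_t/\rho_t+(\Delta_y\rho_t)^2/\rho_t^2+2\nabla_y\Delta_y\rho_t\cdot\nabla_y\rho_t/\rho_t^2-2\Delta_y\rho_t|\nabla_y\rho_t|^2/\rho_t^3$ respectively; by the previous paragraph these are exactly $\nabla_y\dot f_t$ and $\Delta_y\dot f_t$, which finishes the plan.
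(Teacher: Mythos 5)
Your proposal is correct and follows essentially the same route as the paper: the paper's proof is itself only a sketch invoking the standard regularity package for the heat kernel and then applying chain, Leibniz and quotient rules, which is exactly what you spell out (and your explicit formulas all check out). The one sub-step the paper details, the $L^2$-differentiability in time of $t\mapsto\nabla_y\rho_t$, is obtained there via the interpolation bound $\||\nabla u|\|_{L^2}^2\le\tfrac12\|u\|_{L^2}^2+\tfrac12\|\Delta u\|_{L^2}^2$ applied to differences and difference quotients, which is the same fact you attribute to analyticity of the semigroup.
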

	\begin{proof}
		The proof follows from the well-known properties of heat kernel, taking into account the  bounds for $\rho_t(x,y)$ and its derivatives. We prove only the claim about $\partial_t\nabla_y f_t(x,\,\cdot\,)$. We simplify the notation, fixing $x\in\XX$ and $t>0$, the differential operators are with respect to the $y$ variable. We have,
		\begin{align*}
			\nabla f_{t+h}-\nabla f_t=-\frac{\nabla( \rho_{t+h}- \rho_t)}{\rho_{t+h}}-\nabla\rho_t \Big(\frac{1}{\rho_{t+h}}-\frac{1}{\rho_t}\Big).
		\end{align*}
		Moreover, integrating by parts, for $h\in (0,1)$,
		\begin{align*}
			\||\nabla\rho_{t+h}-\nabla \rho_t|\|_{L^2}^2\le \frac{1}{2}\|\rho_{t+h}-\rho_{t}\|_{L^2}^2+\frac{1}{2}\|\Delta\rho_{t+h}-\Delta\rho_{t}\|_{L^2}^2,
		\end{align*}
	which give the absolute continuity statement, by the local absolute continuity of the curves $(0,\infty)\ni t\mapsto \rho_t\in L^2$ and  $(0,\infty)\ni t\mapsto\Delta \rho_t\in L^2$. By similar considerations,
			\begin{align*}
		\||\nabla\rho_{t+h}-\nabla \rho_t-h\nabla\partial_t \rho_t|\|_{L^2}^2\le \frac{1}{2}\|\rho_{t+h}-\rho_{t}-h\partial_t\rho_t\|_{L^2}^2+\frac{1}{2}\|\Delta\rho_{t+h}-\Delta\rho_{t}-h\Delta\partial_t\rho_t\|_{L^2}^2
	\end{align*}
	 yields the differentiability statement.
	\end{proof}
	
\bigskip

Now we are ready to prove Theorem \ref{hkest}.
		In the proof, differential operator are implicitly to be understood with respect to the $y$ variable.
		We start from some preliminary observations. 
		
		To begin with, we make the dependence on the curvature explicit in the Gaussian bounds. Instead of carefully tracing the constants in the proof of \cite{jiang2014heat}, we use a scaling argument. Consider then $(\XX,\dist,\HH^n)$ and also the rescaled space
		$(\XX,\delta\dist,\delta^n\HH^n_\XX)=(\XX,\delta\dist,\HH^n)$. If $\tilde \rho$ denotes the heat kernel on the rescaled space, we have $\rho_t(x,y)=\delta^n\tilde\rho_{\delta^2 t}(x,y)$, and so we can use the Gaussian bounds on the rescaled space, which is a non-collapsed $\RCD(-(n-1),n)$ space, to deduce that, for a constant $C=C(n,v,\epsilon)$ as in the statement (notice that the dependence on $v$ is not needed for this inequality),
		\begin{equation}\label{sharpgaussian}
			\frac{1}{C\HH^n(B_{\sqrt{t}}(x))}\exp(-\frac{\dist^2(x,y)}{(4-\epsilon)t}-C\delta^2t)\le \rho_t(x,y)\le \frac{C}{\HH^n(B_{\sqrt{t}}(x))}\exp(-\frac{\dist^2(x,y)}{(4+\epsilon)t}+C\delta^2t),
		\end{equation}
		for every $x,y\in\XX$ and $t>0$.

		Now, we want to bound  $\HH^n(B_{\sqrt{t}}(x))$, for $x\in\XX$ and $t\in (0,\sqrt{10}\delta^{-1})$.  We recall that by Bishop--Gromov, for every $0<r<R$,
		\begin{equation}\label{vfedcs}
			\frac{\HH^n(B_R(x))}{\HH^n(B_r(x))}\le \frac{\int_0^R \sinh(\delta s)^{n-1}\dd s}{\int_0^r \sinh(\delta s)^{n-1}\dd s}= \frac{\delta^{-1}\int_0^{\delta R} \sinh( s)^{n-1}\dd s}{\int_0^{ r}\sinh(\delta s)^{n-1}\dd s},
		\end{equation}
		so that 
		\begin{equation}\notag
			\HH^n(B_R(x))\le \frac{\HH^n(B_r(x))}{\omega_nr^n}\frac{\omega_n \delta^{n-1} r^n}{\int_0^r \sinh(\delta s)^{n-1}\dd s} \frac{\int_0^{\delta R} \sinh( s)^{n-1}\dd s}{(\delta R)^n}  R^n.
		\end{equation}
		Now, we can let $r\searrow 0$ to see that the first factor converges to $\theta_n[\XX,\dist,\HH^n](x)\le 1$, the second factor converges to  $n\omega_n$ by l'Hôpital rule, and the third factor is bounded for $R\le \sqrt{10}\delta^{-1}$ by a constant that depends only on $n$. All in all,
		\begin{equation}\label{veacd}
			\HH^n(B_{\sqrt{t}}(x))\le C t^{n/2} \quad\text{for every $x\in\XX$ and $t\in(0, 10\delta^{-2})$}.
		\end{equation}
		For the bound from below, notice that, if $x\in B_{10\delta^{-1}}(p)$, then $B_{\delta^{-1}}(p)\subseteq B_{11\delta^{-1}}(x)$, so that,
		$\HH^n (B_{11\delta^{-1}}(x))\ge v \delta^{-n}$. Again by Bishop--Gromov,  for $r\in (0,\sqrt{10}\delta^{-1})$, (plugging $R=11\delta^{-1}$ in \eqref{vfedcs})
		\begin{equation}\notag
\begin{split}
				\HH^n(B_r(x))&\ge{\HH^n(B_{11\delta^{-1}}(x))} \frac{\int_0^r \sinh(\delta s)^{n-1}\dd s}{\int_0^{11\delta^{-1}}\sinh(\delta s)^{n-1}\dd s}\ge C^{-1}\delta^{-n}\frac{\int_0^{r\delta} \sinh( s)^{n-1}\dd s}{\int_0^{11}\sinh( s)^{n-1}\dd s}\\
				&\ge r^n C^{-1} \frac{\int_0^{r\delta} \sinh( s)^{n-1}\dd s}{(r\delta)^n}
\end{split}
		\end{equation}
		so that, as the last factor is bounded for $r\in (0,10\delta^{-1})$,
		\begin{equation}\label{vfdcsxas}
			\HH^n(B_{\sqrt{t}}(x))\ge C^{-1}t^{n/2}\quad\text{for every $x\in B_{10\delta^{-1}(p)}$ and $t\in(0, 10\delta^{-2})$}.
		\end{equation}
		
		Now we fix $a=a(\epsilon)\in (0,1)$, depending only upon $\epsilon$, so that $1+a\le \frac{4-\epsilon/2}{4-\epsilon}$.
		Also, we use the Gaussian bounds (with $\epsilon/2$ in place of $\epsilon$) and the doubling inequality (as stated in \eqref{vfedcs}) to  bound from above, for $t\in (0,10\delta^{-2})$ and $x,y\in\XX$,
		\begin{equation}\notag
			\frac{\|\rho_{at}(x,\,\cdot\,)\|_{\infty}}{h_{t}\rho_{at}(x,\,\cdot\,)(y)}=	\frac{\|\rho_{at}(x,\,\cdot\,)\|_{\infty}}{\rho_{(1+a)t}(x,y)}\le C\frac{\HH^n(B_{\sqrt{(1+a)t}}(x))}{\HH^n(B_{\sqrt{at}}(x))} e^{\frac{\dist^2(x,y)}{(4-\epsilon/2)(1+a)t}+C\delta^2 t}\le C e^{\frac{\dist^2(x,y)}{(4-\epsilon/2)(1+a)t}}
		\end{equation} 
		so that, for $x,y\in\XX$ and $t\in (0,10\delta^{-2})$,
		\begin{equation}\label{gteafr}
			\log\bigg( \frac{\|\rho_{at}(x,\,\cdot\,)\|_{\infty}}{h_{t}\rho_{at}(x,\,\cdot\,)(y)}\bigg)\le C+\frac{\dist^2(x,y)}{(4-\epsilon/2)(1+a)t}\le C+\frac{\dist^2(x,y)}{(4-\epsilon)(1+a)^2t}
		\end{equation}
		where we also used the choice of $a$.
		
		Now we are ready to start the proof of the theorem. For item $\it (1)$, we simply use the Gaussian upper bound to deduce
		\begin{equation}\notag
			f_t(x,y)\le C+ \log(\HH^n(B_{\sqrt{t}}(x)))+\frac{\dist^2(x,y)}{(4-\epsilon)t}-\frac{n}{2}\log t\le C+\frac{\dist^2(x,y)}{(4-\epsilon)t}
		\end{equation}
		where we also used \eqref{veacd} in the second inequality. For the lower bound, we argue similarly, but using \eqref{vfdcsxas} in  place of \eqref{veacd}.
		
		Now we prove item $\it (2)$.  We start from item $\it (1)$ of Theorem \ref{grfeac}, with $u=\rho_{at} (x,\,\cdot\,)$, so that $h_t u(y)=\rho_{(1+a)t}(x,y)$. We therefore obtain 
		\begin{equation}\notag
			t|\nabla_y \log \rho_{(1+a)t}|^2(x,y)\le (1+2(n-1)\delta^2t)  \bigg(C+\frac{\dist^2(x,y)}{(4-\epsilon)(1+a)^2t}\bigg)\quad\text{for $\mass$-a.e.\ $y$},
		\end{equation}
		where we used \eqref{gteafr}. This reads, for $t\in (0,10\delta^{-2})$
		\begin{equation}\notag
			t|\nabla_y f_{t}|^2(x,y)\le (1+a)(1+2(n-1)\delta^2t)  \bigg(C+\frac{\dist^2(x,y)}{(4-\epsilon)(1+a)t}\bigg)\quad\text{for $\mass$-a.e.\ $y$},
		\end{equation}
		which is the claim of item $\it (2)$.
		
		Now we turn the upper bound of item $\it (3)$, recalling that 
		$\Delta_y f_t(x,y)=-\frac{\Delta_y\rho_t(x,y)}{\rho_t(x,y)}+|\nabla_y f_t|^2(x,y)$ for $\mass$-a.e.\ $y$.
		We start recalling   item $\it (2)$ of Theorem \ref{grfeac} (with $\rho_t(x,\,\cdot\,)$ in place of $u$), that implies, for $\mass$-a.e.\ $y\in\YY$,
		\begin{equation}\notag
			-\frac{\Delta_y \rho_{2t}(x,y)}{\rho_{2t}(x,y)}\le e^{-2 (n-1)\delta^2 t/3}\frac{n(n-1)\delta^2}{3}\frac{e^{4(n-1)\delta^2t/3}}{e^{2(n-1)\delta^2 t/3}-1}\le \frac{C}{t}\quad\text{for $2t\in (0,10\delta^{-2})$},
		\end{equation}
		that, thanks to item $\it (2)$, implies the upper bound.
		
		We conclude by showing the lower bound of item $\it (3)$. By item $\it (3)$ of Theorem \ref{grfeac} and \eqref{gteafr}, for $\rho_{a t}(x,\,\cdot\,)$  in place of $u$ again,
		\begin{equation}\notag
			t\frac{\Delta_y \rho_{(1+a)t}(x,y)}{\rho_{(1+a)t}(x,y)}\le e^{(n-1)\delta^2 t}\Big(  
			C+4\frac{\dist^2(x,y)}{(4-\epsilon)(1+a)^2t}\Big)\quad\text{for $\mass$-a.e.\ $y$},
		\end{equation}
		so that 
		\begin{equation}\notag
			t\Delta_y f_t(x,y)\ge -e^{(n-1)\delta^2 t/(1+a)}\Big(C+4\frac{\dist^2(x,y)}{(4-\epsilon)t}\Big)\quad\text{for $\mass$-a.e.\ $y$},
		\end{equation}
		which concludes the proof.\qed

		\section{Perelman's entropy}
Now we deal with the computations concerning  Perelman's entropy functional. Recall first Definition \ref{vefdca} and Definition \ref{defnentropy}. 

We define a modified version of the entropy by multiplying the integrand by a compactly supported regular function $\varphi$. 
This approach simplifies the computations, as the heat kernel is uniformly bounded from below on the support of $\varphi$.
		\begin{defn}\label{defnentropy2}
In the setting of Definition \ref{defnentropy}, if $\varphi\in \LIP_{\mathrm{bs}}(\XX)$, define also
		\begin{equation}\notag
			\mathcal{W}_{t,\varphi}(x)\defeq \int_\XX \varphi(y)W_t(x,y) \rho_t(x,y)\dd\mass(y).
		\end{equation} 
	\end{defn}
		For the following definition, we restrict ourselves to the non-collapsed case, as we are aware of applications only in this setting (\cite{CJNrect}).
	\begin{defn}\label{defnentropya}
		Let  $(\XX,\dist,\HH^n)$ be a non-collapsed $\RCD(-(n-1)\delta^2,n)$ space, for $\delta>0$, and let $p\in\XX$. Assume that $v\in (0,1)$ is such that $\HH^n(B_r(p))\ge v r^n$ for any $r\in(0,\delta^{-1})$.
		Let $\varphi_\delta$ as in Lemma \ref{goodcutoff} for $p$ and $r=\delta^{-1}$, and define, for $x\in B_{\delta^{-1}/2}(p)$,
		\begin{equation}\notag
\begin{split}
				\mathcal{W}_{t}^\delta(x)&\defeq \mathcal{W}_{t,\varphi_\delta}(x)-(n-1)\delta^2\int_0^t 4s\int_\XX \varphi_\delta(y)|\nabla_y f_s|^2(x,y)\rho_s(x,y)\dd\mass(y)\\
				&\quad\quad-D(n,v)\delta^2 \int_0^t e^{-1/(100 \delta^2 s)}\dd s,
\end{split}
		\end{equation}
		where $D(n,v)$ is a parameter to be chosen, see Corollary \ref{corgrfdsa}.
	\end{defn}
	Notice that  the integrals in Definition \ref{defnentropy}, Definition \ref{defnentropy2} and Definition \ref{defnentropya} are well defined, thanks to  \eqref{vaawecaweak}  and both the  Gaussian bounds for the heat kernel. 
	The key observation here, is that for every $\eta\in (0,1/6)$, we have that, for $t\in (0,T)$,
	\begin{equation}\label{frcaes}
		\int_\XX e^{\eta \frac{\dist^2(x,y)}{t}} \rho_t(x,y)\le C(K,N,T).
	\end{equation}
	In particular, for every $x\in\XX$, we have that 
	\begin{equation}\label{vefvdsac}
		|W_t(x,\,\cdot\,)|\rho_t(x,\,\cdot\,)\in L_{\mathrm{loc},t}^\infty((0,\infty),L^1_y)\quad\text{and}\quad t|\nabla_y f_t|^2(x,\,\cdot\,)\rho_t(x,\,\cdot\,)\in L_{\mathrm{loc},t}^\infty([0,\infty),L^1_y),
	\end{equation}
	where, by $h_t\in L_{\mathrm{loc},t}^\infty([0,\infty),L^1)$, we mean that for any $a>0$,
	$\sup_{t\in (0,a) }\| h_t\|_{L^1}<\infty$.
	Moreover, if the space is non-collapsed (i.e.\ $N$ coincides with the essential dimension of the space), then both the functions in the above equation belong to $L_{\mathrm{loc},t}^\infty([0,\infty),L^1_y)$ by Theorem \ref{hkest}.

\subsection{Derivative of the entropy}\label{bvefdcsacsd}
Theorem \ref{entrointro} contains the  formula for the derivative of the entropy in a particular case. We are going first to prove a more general version, i.e.\ the formula for the derivative of the modified entropy as in Definition \ref{defnentropy2}, in which the presence of the cut-off function   is helpful in order to do the necessary computations. We will get rid of the cut-off function only at the very end, through a limiting argument,  in Corollary \ref{bvrfdca}.

	\begin{thm}\label{vefadcas}
		Let  $(\XX,\dist,\mass)$ be an $\RCD(K,N)$ space of essential dimension $n$. Let $\varphi\in \LIP_{\mathrm{bs}}(\XX)\cap D(\Delta)$, with $\Delta\varphi\in L^\infty$. Then, for every $x\in\XX$, the curve $t\mapsto\mathcal{W}_{t,\varphi}(x)$ is locally absolutely continuous in $(0,\infty)$, and is differentiable at every $t>0$ with 
\begin{equation}\notag
	\begin{split}
	\partial_t \mathcal{W}_{t,\varphi}(x)&=-2t\int\varphi(y)\rho_t(x,y)\Ric_N(\nabla_y f_t,\nabla_y f_t)(x,\dd y) \\
	&\quad\quad-2t\int\varphi(y)\Big| \hess_y f_t(x,y)-\frac{1}{2t}g(y)\Big|^2\rho_t(x,y)\dd\mass(y)\\
	&\quad\quad-2t\frac{1}{N-n}\int\varphi(y)\Big(\big(\tr(\hess_y f_t(x,y))-\Delta_y f_t(x,y)\big)+\frac{N-n}{2t}\Big)^2\rho_t(x,y)\dd\mass(y)\\
&\quad\quad +\int_\XX \Delta\varphi(y)W_t(x,y)\rho_t(x,y)\dd\mass(y).
	\end{split}
\end{equation}
	\end{thm}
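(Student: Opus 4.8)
The plan is to differentiate under the integral sign, identify the resulting integrand by means of the heat equation and the Bochner identity defining $\Ric_N$, and finally perform one integration by parts — the one responsible for the $\Delta\varphi$ term. Every computation is carried out on a fixed bounded open neighbourhood $U$ of $\mathrm{supp}(\varphi)$, where, by the Gaussian bounds of Proposition \ref{gaussbound} and Proposition \ref{prop28}, the functions $\rho_t(x,\cdot)$, $|\nabla_y\rho_t|(x,\cdot)$, $|\Delta_y\rho_t|(x,\cdot)$ and $|\dot\rho_t|(x,\cdot)$ are bounded and $\rho_t(x,\cdot)$ is bounded below, locally uniformly in $t\in(0,\infty)$; consequently $f_t(x,\cdot)=-\log\rho_t(x,\cdot)+\mathrm{const}$ shares the regularity of $\rho_t(x,\cdot)$ and is, in particular, locally a test function, so that $|\nabla_yf_t|^2\in D_{\mathrm{loc}}(\DDelta)$ and $\hess_yf_t,\ \tr(\hess_yf_t)$ are well defined, and the local version of the improved Bochner identity defining $\Ric_N$ applies to it.

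\emph{Differentiation under the integral.} Proposition \ref{prop28} tells us that, for fixed $x$ and restricted to any bounded set, the curves $t\mapsto\rho_t(x,\cdot)$, $f_t(x,\cdot)$, $\nabla_yf_t(x,\cdot)$, $\Delta_yf_t(x,\cdot)$ are locally absolutely continuous and differentiable at every $t>0$ as $L^2$-valued curves, with derivatives $\Delta_y\rho_t$, $\dot f_t$, $\nabla_y\dot f_t$, $\Delta_y\dot f_t$, and are bounded in $L^\infty$ locally uniformly in time. By the Leibniz rule the same holds for $t\mapsto W_t(x,\cdot)\rho_t(x,\cdot)$ restricted to $\mathrm{supp}(\varphi)$, with $\partial_t(W_t\rho_t)=(\partial_tW_t)\rho_t+W_t\Delta_y\rho_t$ and $\partial_tW_t=2\Delta_yf_t+2t\Delta_y\dot f_t-|\nabla_yf_t|^2-2t\,\nabla_yf_t\cdot\nabla_y\dot f_t+\dot f_t$. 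Testing against $\varphi\in L^\infty\cap L^2$ then yields that $t\mapsto\mathcal W_{t,\varphi}(x)$ is locally absolutely continuous, differentiable at every $t>0$, and $\partial_t\mathcal W_{t,\varphi}(x)=\int_\XX\varphi\,\partial_t(W_t\rho_t)\,\dd\mass$; finiteness of all the integrals in sight is granted by \eqref{vaawecaweak} and the Gaussian bounds (cf.\ \eqref{frcaes}).

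\emph{The Bochner identity.} The heart of the proof is the identity on $\mathrm{supp}(\varphi)$, as signed Radon measures,
\[
\partial_t(W_t\rho_t)\,\mass-\DDelta(W_t\rho_t)=-2t\rho_t\Big(\Ric_N(\nabla_yf_t,\nabla_yf_t)+\big|\hess_yf_t-\tfrac{1}{2t}g\big|^2\mass+\tfrac{1}{N-n}\big(\tr(\hess_yf_t)-\Delta_yf_t+\tfrac{N-n}{2t}\big)^2\mass\Big).
\]
I would first verify $W_t\rho_t\in D_{\mathrm{loc}}(\DDelta)$ without invoking any third-order calculus: by Proposition \ref{prop28}, $\Delta_yf_t=-\Delta_y\rho_t/\rho_t+|\nabla_yf_t|^2$, and on $U$ both $\Delta_y\rho_t=\dot\rho_t$ and $1/\rho_t$ belong to $D_{\mathrm{loc}}(\DDelta)\cap L^\infty_{\mathrm{loc}}\cap H^{1,2}_{\mathrm{loc}}$ (for $1/\rho_t$ by the chain rule for $\DDelta$) while $|\nabla_yf_t|^2\in D_{\mathrm{loc}}(\DDelta)$; Lemma \ref{bgrfdva1} then gives $\Delta_yf_t\in D_{\mathrm{loc}}(\DDelta)$, hence $W_t\in D_{\mathrm{loc}}(\DDelta)$ and $W_t\rho_t\in D_{\mathrm{loc}}(\DDelta)$. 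Next, applying Lemma \ref{bgrfdva1} to $W_t\rho_t$, substituting $\Delta_y\rho_t=\dot\rho_t$ and $\nabla_y\rho_t=-\rho_t\nabla_yf_t$, and using the expression for $\partial_t(W_t\rho_t)$ above, the left-hand side collapses to $\rho_t\big((\partial_tW_t)\mass-\DDelta W_t+2(\nabla_yW_t\cdot\nabla_yf_t)\,\mass\big)$. Expanding $W_t=2t\Delta_yf_t-t|\nabla_yf_t|^2+f_t-N$, substituting $\dot f_t=\Delta_yf_t-|\nabla_yf_t|^2-\tfrac N{2t}$ (Proposition \ref{prop28}) and simplifying — all third-order terms cancel — reduces this to
\[
(\partial_tW_t)\mass-\DDelta W_t+2(\nabla_yW_t\cdot\nabla_yf_t)\,\mass=\Big(2\Delta_yf_t-\tfrac N{2t}\Big)\mass-2t\Big(\tfrac12\DDelta|\nabla_yf_t|^2-(\nabla_yf_t\cdot\nabla_y\Delta_yf_t)\,\mass\Big).
\]
Rewriting the bracket via the (local) improved Bochner identity defining $\Ric_N$ from Section \ref{prelsect} and completing the two squares $-2t|\hess_yf_t-\tfrac{1}{2t}g|^2$ and $-\tfrac{2t}{N-n}(\tr(\hess_yf_t)-\Delta_yf_t+\tfrac{N-n}{2t})^2$ — using $\langle\hess_yf_t,g\rangle=\tr(\hess_yf_t)$ and $|g|^2=n$ $\mass$-a.e.\ — is exactly Ni's static-entropy arithmetic and produces the displayed identity; along the way one checks that the only singular part on either side is $-2t\rho_t$ times the singular part of $\Ric_N(\nabla_yf_t,\nabla_yf_t)$, consistently with $\partial_t(W_t\rho_t)$ being absolutely continuous.

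\emph{Conclusion and main obstacle.} Integrating the identity against $\varphi$, and using $\varphi\in\LIP_{\mathrm{bs}}(\XX)\cap D(\Delta)$ together with $W_t\rho_t\in D_{\mathrm{loc}}(\DDelta)$ to write $\int_\XX\varphi\,\dd\DDelta(W_t\rho_t)=\int_\XX W_t\rho_t\,\Delta\varphi\,\dd\mass$ — an integration by parts performed twice, as in the proof of Lemma \ref{bgrfdva1} through quasi-continuous representatives, legitimate since $\Delta\varphi\in L^\infty$ and all integrands lie in $L^1$ by \eqref{vaawecaweak} and the Gaussian bounds — one arrives at the stated formula. I expect the main obstacle to be precisely the measure-valued Bochner computation: one must simultaneously keep $W_t\rho_t$ inside $D_{\mathrm{loc}}(\DDelta)$ (so that all Laplacians are routed through $\Delta_y\rho_t=\dot\rho_t$, avoiding third-order objects) and control the singular parts through every application of Lemma \ref{bgrfdva1}, of the chain rule, and of Bochner. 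The cut-off $\varphi$ is exactly what makes $\rho_t(x,\cdot)$ and $f_t(x,\cdot)$ bounded and bounded below on its support, hence amenable to these tools — which is why the computation is carried out first for $\mathcal W_{t,\varphi}$.
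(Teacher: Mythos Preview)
Your proposal is correct and follows essentially the same route as the paper: differentiate under the integral using Proposition \ref{prop28}, compute $(\partial_t-\DDelta)(W_t\rho_t)$ via Lemma \ref{bgrfdva1} and the substitution $\dot f_t=\Delta_yf_t-|\nabla_yf_t|^2-\tfrac{N}{2t}$, rewrite the result through the improved Bochner identity defining $\Ric_N$ and complete the squares, and finally integrate against $\varphi$ to produce the $\Delta\varphi$ term. Your emphasis on routing all Laplacians through $\Delta_y\rho_t=\dot\rho_t$ to keep $W_t\in D_{\mathrm{loc}}(\DDelta)$ without third-order objects, and on tracking singular parts, is exactly the care the paper exercises (albeit more tersely).
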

	In the above formula, in the case in which the space is non-collapsed (i.e.\ $N=n$), the integral multiplied by $\frac{1}{N-n}$ has to be interpreted as $0$.
	
	If we formally insert $\varphi \equiv1$ in Theorem \ref{vefadcas}, we obtain the statement of the following corollary. 
		\begin{cor}\label{bvrfdca}
		Let  $(\XX,\dist,\mass)$ be an $\RCD(K,N)$ space of essential dimension $n$. Then, for every $x\in\XX$, the curve $t\mapsto\mathcal{W}_{t}(x)$ is locally absolutely continuous in $(0,\infty)$, and is differentiable at every $t>0$ with 
\begin{equation}\notag
	\begin{split}
		\partial_t \mathcal{W}_{t}(x)&=-2t\int\rho_t(x,y)\Ric_N(\nabla_y f_t,\nabla_y f_t)(x,\dd y) \\
		&\quad\quad-2t\int\Big| \hess_y f_t(x,y)-\frac{1}{2t}g(y)\Big|^2\rho_t(x,y)\dd\mass(y)\\
		&\quad\quad-2t\frac{1}{N-n}\int\Big(\big(\tr(\hess_y f_t(x,y))-\Delta_y f_t(x,y)\big)+\frac{N-n}{2t}\Big)^2\rho_t(x,y)\dd\mass(y)\\
		&\le -2Kt \int |\nabla_y f_t|^2\rho_t(x,y)\dd\mass(y).
	\end{split}
\end{equation}
	\end{cor}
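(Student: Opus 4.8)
The plan is to obtain Corollary \ref{bvrfdca} from Theorem \ref{vefadcas} by letting the cut-off function increase to $1$. Fix $x\in\XX$ and, for $R>1$, pick via Lemma \ref{goodcutoff} a cut-off $\varphi_R\in\LIP_{\mathrm{bs}}(\XX)\cap D(\Delta)$ with $0\le\varphi_R\le1$, $\varphi_R\equiv1$ on $B_R(x)$, $\mathrm{supp}(\varphi_R)\subseteq B_{2R}(x)$ and $R^2|\Delta\varphi_R|+R|\nabla\varphi_R|\le C$; in particular $|\Delta\varphi_R|\le C/R^2$ and $\Delta\varphi_R$ is supported in the annulus $B_{2R}(x)\setminus B_R(x)$. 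Theorem \ref{vefadcas} applies to every $\varphi_R$, and the whole point is to pass to the limit $R\to\infty$, most conveniently in the integrated identity $\mathcal{W}_{t_2,\varphi_R}(x)-\mathcal{W}_{t_1,\varphi_R}(x)=\int_{t_1}^{t_2}\partial_t\mathcal{W}_{t,\varphi_R}(x)\dd t$ for $0<t_1<t_2$, which holds because $t\mapsto\mathcal{W}_{t,\varphi_R}(x)$ is locally absolutely continuous by Theorem \ref{vefadcas}.

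For the left-hand side, $|\mathcal{W}_{t,\varphi_R}(x)-\mathcal{W}_t(x)|\le\int_{\XX\setminus B_R(x)}|W_t|\rho_t\dd\mass$, and since by \eqref{vefvdsac} (together with \eqref{frcaes}, \eqref{vaawecaweak} and the Gaussian bounds of Proposition \ref{gaussbound}) the densities $|W_t|\rho_t$ are uniformly dominated by an integrable function for $t$ in compact subsets of $(0,\infty)$, this tends to $0$ locally uniformly in $t$. On the right-hand side, the cut-off error term satisfies $|\int_\XX\Delta\varphi_R\,W_t\rho_t\dd\mass|\le\frac{C}{R^2}\int_\XX|W_t|\rho_t\dd\mass\to0$, again uniformly for $t$ in compact intervals. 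The three ``energy'' integrals are handled by monotone convergence: writing $\Ric_N(\nabla_y f_t,\nabla_y f_t)=K|\nabla_y f_t|^2\mass+\sigma_t$ with $\sigma_t\ge0$ a non-negative Radon measure (by \eqref{improvedbochner}), the quantities $\int\varphi_R\rho_t\dd\sigma_t$, $\int\varphi_R|\hess_y f_t-\tfrac1{2t}g|^2\rho_t\dd\mass$ and $\int\varphi_R(\tr(\hess_y f_t)-\Delta_y f_t+\tfrac{N-n}{2t})^2\rho_t\dd\mass$ have non-negative integrands and increase, as $R\to\infty$, to their $\varphi\equiv1$ analogues, whereas $\int\varphi_R|\nabla_y f_t|^2\rho_t\dd\mass\to\int|\nabla_y f_t|^2\rho_t\dd\mass$ by dominated convergence.

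Let $\Phi(t)$ denote the right-hand side of the displayed formula in the statement of the corollary. Passing to the limit $R\to\infty$ in the integrated identity, the left-hand side converges to $\mathcal{W}_{t_2}(x)-\mathcal{W}_{t_1}(x)$, the cut-off error contributes nothing, and, splitting $\partial_t\mathcal{W}_{t,\varphi_R}(x)$ into the term $-2tK\int\varphi_R|\nabla_y f_t|^2\rho_t\dd\mass$ (which converges by dominated convergence) and a remainder which is monotone non-increasing in $R$, monotone convergence applied on $(t_1,t_2)$ shows simultaneously that $\Phi\in L^1_{\mathrm{loc}}((0,\infty))$ and that $\mathcal{W}_{t_2}(x)-\mathcal{W}_{t_1}(x)=\int_{t_1}^{t_2}\Phi(t)\dd t$. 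Hence $t\mapsto\mathcal{W}_t(x)$ is locally absolutely continuous with a.e.\ derivative $\Phi$. To upgrade this to differentiability at every $t>0$ it suffices to know that $\Phi$ is finite and continuous on all of $(0,\infty)$; granting this, the fundamental theorem of calculus shows that $t\mapsto\mathcal{W}_t(x)$ is $C^1$ with $\partial_t\mathcal{W}_t(x)=\Phi(t)$. Finally, dropping the two manifestly non-negative terms and using $\Ric_N(\nabla_y f_t,\nabla_y f_t)\ge K|\nabla_y f_t|^2\mass$ gives $\partial_t\mathcal{W}_t(x)\le-2Kt\int|\nabla_y f_t|^2\rho_t\dd\mass$.

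The main obstacle is precisely the finiteness and continuity of $\Phi$ for \emph{every} $t$, since monotone convergence alone yields only $\Phi\in L^1_{\mathrm{loc}}$. This has to be read off from genuine estimates: one bounds $\int|\nabla_y f_t|^2\rho_t\dd\mass$, $\int(\Delta_y f_t)^2\rho_t\dd\mass$ and $\int|\hess_y f_t|^2\rho_t\dd\mass$, uniformly for $t$ in compact intervals, using \eqref{frcaes} together with the bounds of Theorem \ref{hkest} in the non-collapsed case and with \eqref{vaawecaweak} and the estimates established in the course of proving Theorem \ref{vefadcas} in general; one then deduces $t$-continuity of each of the integrals making up $\Phi$ by dominated convergence against these same majorants, using the local continuity in $t$ of $\rho_t$, $\nabla_y f_t$, $\Delta_y f_t$ and $\hess_y f_t$ recorded in Proposition \ref{prop28}. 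The remaining, purely routine, point is the uniform-in-$t$ vanishing of the cut-off error, for which the $C/R^2$ decay of $\Delta\varphi_R$ beats the locally $t$-uniform bound on $\int|W_t|\rho_t\dd\mass$ with room to spare.
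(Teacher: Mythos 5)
Your overall strategy coincides with the paper's: apply Theorem \ref{vefadcas} to an increasing family of cut-offs, pass to the limit in the integrated identity $\mathcal{W}_{t_2,\varphi}-\mathcal{W}_{t_1,\varphi}=\int_{t_1}^{t_2}\partial_t\mathcal{W}_{t,\varphi}\dd t$, kill the $\Delta\varphi$ error term, and treat the three energy integrals by monotone convergence after splitting off the $K|\nabla f_t|^2$ part of $\Ric_N$ via \eqref{improvedbochner}. All of that is fine and matches the paper. You also correctly identify that the real issue is the finiteness and $t$-continuity of the limit integrand $\Phi(t)$, without which one only gets local absolute continuity and an a.e.\ derivative, not differentiability at \emph{every} $t$.

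The gap is in how you propose to close that issue. You claim one can bound $\int|\hess_y f_t|^2\rho_t\dd\mass$ uniformly on compact $t$-intervals ``using \eqref{frcaes} together with the bounds of Theorem \ref{hkest} \dots and with \eqref{vaawecaweak}'', and then get $t$-continuity from ``the local continuity in $t$ of \dots $\hess_y f_t$ recorded in Proposition \ref{prop28}''. Neither ingredient exists: Theorem \ref{hkest} and \eqref{vaawecaweak} control only $f_t$, $|\nabla_y f_t|$ and $\Delta_y f_t$ (there is no pointwise Gaussian-type bound on the Hessian of the log-heat-kernel on an $\RCD$ space), and Proposition \ref{prop28} says nothing about $\hess_y f_t$; the term $\int\rho_t\,\dd\Ric_N(\nabla f_t,\nabla f_t)$ is moreover a priori a measure with a possibly singular part, so ``dominated convergence against majorants'' does not even parse for it. The missing device is the integration by parts built into the defining Bochner identity for $\Ric_N$: the paper rewrites the whole bracket $\Ric_N(\nabla f,\nabla f)+|\hess f-\tfrac{1}{2t}g|^2+\cdots$ as $\DDelta\tfrac{|\nabla f|^2}{2}-\nabla f\cdot\nabla\Delta f-\tfrac{\Delta f}{t}+\tfrac{N}{4t^2}$ and integrates by parts against $\varphi_k\rho_t$, so that ${\bf I}_{k,t}$ becomes an integral involving only $\rho_t$, $\nabla\rho_t$, $\Delta\rho_t$, $|\nabla f_t|$, $\Delta f_t$. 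These are dominated by a single $t$-independent $L^1$ majorant $G$ (via the Gaussian bounds, \eqref{vaawecaweak} and \eqref{frcaes}), which gives at once $|{\bf I}_t|<\infty$ for every $t$, the estimate $|{\bf I}_t-{\bf I}_{k,t}|\le C\int_{\XX\setminus B_k}G\to0$ uniformly in $t$ on compact intervals, and hence continuity of $t\mapsto{\bf I}_t$ as a uniform limit of continuous functions. Without this step your argument cannot deliver everywhere-differentiability, which is part of the claimed statement.
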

Again, in the case in which the space is non-collapsed (i.e.\ $N=n$), the integral multiplied by $\frac{1}{N-n}$ has to be interpreted as $0$. 
	
	We also have the following corollary, cf.\ \cite[Section 4.6]{CJNrect}.
	\begin{cor}\label{corgrfdsa}
		Let  $(\XX,\dist,\HH^n)$ be a non-collapsed $\RCD(-(n-1)\delta^2,n)$ space, for $\delta>0$, and let $p\in\XX$. Assume that $v\in (0,1)$ is such that $\HH^n(B_r(p))\ge v r^n$ for any $r\in(0,\delta^{-1})$. Then, for any $t\in (0,\delta^{-2})$ and $x\in B_{\delta^{-1}/2}(p)$, choosing $D(n,v)$ depending only upon $n$ and $v$, we have that the curve $t\mapsto\mathcal{W}_{t}^\delta(x)$ is locally absolutely continuous in $(0,\delta^{-2})$,  is differentiable at every $t>0$, and is non-increasing. 
	\end{cor}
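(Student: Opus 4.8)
The plan is to differentiate $\mathcal{W}_t^\delta(x)$ using Theorem \ref{vefadcas} with $\varphi=\varphi_\delta$ for the main term, and to observe that the two correction terms in Definition \ref{defnentropya} are tailored so that the derivative of the sum is non-positive. For the regularity part: by Lemma \ref{goodcutoff} the function $\varphi_\delta$ lies in $\LIP_{\mathrm{bs}}(\XX)\cap D(\Delta)$, satisfies $0\le\varphi_\delta\le1$ and $|\Delta\varphi_\delta|\le C(n)\delta^2$, equals $1$ on $B_{\delta^{-1}}(p)$ and is supported in $B_{2\delta^{-1}}(p)$; so $\Delta\varphi_\delta\in L^\infty$, supported in the annulus $B_{2\delta^{-1}}(p)\setminus B_{\delta^{-1}}(p)$, and Theorem \ref{vefadcas} gives local absolute continuity and pointwise differentiability of $t\mapsto\mathcal{W}_{t,\varphi_\delta}(x)$. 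The remaining two summands of $\mathcal{W}_t^\delta$ are primitives in $s$ of a function that is locally bounded on $[0,\infty)$ by \eqref{vefvdsac} and continuous on $(0,\infty)$ by Proposition \ref{prop28}, respectively of the bounded continuous function $s\mapsto e^{-1/(100\delta^2 s)}$; hence $t\mapsto\mathcal{W}_t^\delta(x)$ is locally absolutely continuous in $(0,\delta^{-2})$, differentiable at every $t\in(0,\delta^{-2})$, and
\[
\partial_t\mathcal{W}_t^\delta(x)=\partial_t\mathcal{W}_{t,\varphi_\delta}(x)-4t(n-1)\delta^2\int_\XX\varphi_\delta|\nabla_y f_t|^2\rho_t\dd\HH^n-D(n,v)\,\delta^2 e^{-1/(100\delta^2 t)}.
\]

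Next I would exploit the cancellation. Since the space is non-collapsed, $N=n$ and the term of Theorem \ref{vefadcas} carrying the factor $\tfrac1{N-n}$ vanishes; applying the improved Bochner inequality \eqref{improvedbochner} with $K=-(n-1)\delta^2$ and using $\varphi_\delta\ge0$, $\rho_t\ge0$ bounds the Ricci term:
\[
-2t\int\varphi_\delta\,\rho_t\,\Ric_N(\nabla_y f_t,\nabla_y f_t)(x,\dd y)\le 2t(n-1)\delta^2\int_\XX\varphi_\delta|\nabla_y f_t|^2\rho_t\dd\HH^n,
\]
which is half of the first correction term I subtracted. Dropping the non-positive Hessian term, I obtain
\[
\partial_t\mathcal{W}_t^\delta(x)\le -2t(n-1)\delta^2\int_\XX\varphi_\delta|\nabla_y f_t|^2\rho_t\dd\HH^n+\int_\XX\Delta\varphi_\delta\,W_t\rho_t\dd\HH^n-D(n,v)\,\delta^2 e^{-1/(100\delta^2 t)};
\]
the first term is $\le0$, so the whole derivative is $\le0$ as soon as the cut-off error $\int_\XX\Delta\varphi_\delta\,W_t\rho_t\dd\HH^n$ is dominated in absolute value by $D(n,v)\,\delta^2 e^{-1/(100\delta^2 t)}$ for $t\in(0,\delta^{-2})$.

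The last domination is the heart of the matter. The integrand is supported in $B_{2\delta^{-1}}(p)\setminus B_{\delta^{-1}}(p)$, and since $x\in B_{\delta^{-1}/2}(p)$ we have $\tfrac12\delta^{-1}\le\dist(x,y)\le 3\delta^{-1}$ there, whence $\dist^2(x,y)/t\ge 1/(4\delta^2 t)$; moreover $x,y\in B_{10\delta^{-1}}(p)$ and $t<10\delta^{-2}$, so Theorem \ref{hkest} applies. Fixing a small $\epsilon\in(0,1)$, \eqref{vaaweca} gives $|W_t(x,y)|\le C(n,v)(1+\dist^2(x,y)/t)$, while \eqref{sharpgaussian}, the volume lower bound \eqref{vfdcsxas} and $\delta^2t<1$ give $\rho_t(x,y)\le C(n,v)\,t^{-n/2}e^{-\dist^2(x,y)/(5t)}$; together with $|\Delta\varphi_\delta|\le C(n)\delta^2$ and $\HH^n(B_{2\delta^{-1}}(p))\le C(n)\delta^{-n}$ from \eqref{veacd}, and writing $s=\delta^2t\in(0,1)$ (so that $\delta^{-n}t^{-n/2}=s^{-n/2}$), one is reduced to bounding $s^{-n/2}\sup_{u\ge 1/(4s)}(1+u)e^{-u/5}$ by a constant multiple of $e^{-1/(100 s)}$. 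For $s\le\tfrac1{16}$ the supremum is attained at $u=1/(4s)$ (the map $u\mapsto(1+u)e^{-u/5}$ decreasing for $u\ge4$), and $s^{-n/2}(1+\tfrac1{4s})e^{-1/(20s)}\le C(n)e^{-1/(100s)}$ because $s^{-n/2}(1+\tfrac1{4s})e^{-1/(25s)}$ is bounded on $(0,\tfrac1{16}]$; for $s\in(\tfrac1{16},1)$ one has $t^{-n/2}\le C(n)\delta^n$ and a universal bound on the supremum, while $e^{-1/(100s)}$ is bounded below. Choosing $D(n,v)$ to be the resulting constant yields $\partial_t\mathcal{W}_t^\delta(x)\le0$.

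I expect this last estimate to be the main obstacle: it is where the precise shape $e^{-1/(100\delta^2 s)}$ and the constant $100$ in Definition \ref{defnentropya} get fixed, and it relies both on the sharp, exponential-free bounds of Theorem \ref{hkest} (rather than the naive Gaussian bounds recalled in Remark \ref{vfdvfdcd}) and on a careful bookkeeping of the geometry — specifically on the hypotheses $x\in B_{\delta^{-1}/2}(p)$ and $t<\delta^{-2}$, which force $\dist^2(x,y)/t$ to be at least a universal multiple of $1/(\delta^2 t)$ on the support of $\Delta\varphi_\delta$, so that the Gaussian factor of the heat kernel beats the polynomial growth of $W_t$ and the blow-up of $t^{-n/2}$.
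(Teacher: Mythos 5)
Your proposal is correct and follows essentially the same route as the paper: differentiate via Theorem \ref{vefadcas}, absorb the Ricci term into half of the $4s(n-1)\delta^2\int\varphi_\delta|\nabla f_s|^2\rho_s$ correction, and dominate the cut-off error $\int\Delta\varphi_\delta\,W_t\rho_t$ by $D(n,v)\delta^2e^{-1/(100\delta^2t)}$ using that $\dist(x,y)\ge\delta^{-1}/2$ on $\operatorname{supp}\Delta\varphi_\delta$ together with the sharp, $\delta$-uniform bounds \eqref{vaaweca}, \eqref{sharpgaussian}, \eqref{veacd}, \eqref{vfdcsxas}. The only cosmetic difference is that you carry out the final $\delta$-independent domination by an explicit one-variable supremum in $s=\delta^2t$, where the paper inserts $e^{\pm1/(100\delta^2t)}$ and appeals to \eqref{frcaes} (with the same caveat that the sharp Gaussian bounds are needed to remove the $\delta$-dependence).
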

	\subsection{Proof of the main results}
	The goal of this section is to prove the results of Section \ref{bvefdcsacsd}.
	\begin{proof}[Proof of Theorem \ref{vefadcas}]
		Throughout the proof, we will implicitly use the conclusions of Proposition \ref{prop28}, and we recall the bounds on  $\rho_t(x,y)$ and its derivatives. Then, we easily gain absolute continuity and differentiability of  $	 \mathcal{W}_{t,\varphi}(x)$.
We simplify the notation, fixing $x\in\XX$ and $t>0$, the differential operators are with respect to the $y$ variable and integration is with respect to $\dd\mass(y)$ on the whole space $\XX$, except for the terms involving $\DDelta\frac{|\nabla f|^2}{2}$, in which case the integration is with respect to $\DDelta\frac{|\nabla f|^2}{2}(x,\dd y)$. We follow morally the same computations as in \cite{NiEntropy}, and everything is justified by the presence of the cut-off function $\varphi$ and Proposition \ref{prop28}.

		
		 We explicitly differentiate $W_t$ in time as a function $(0,\infty)\rightarrow L^2_{\mathrm{loc}}$, to obtain
		\begin{align*}\notag
			\dot W_t&=2\Delta f+	2t\Delta \dot f-|\nabla f|^2-2t\nabla f\,\cdot\,\nabla \dot f+\dot f\\
			&=( 2\Delta f -|\nabla f|^2 +\dot f) +2t\Delta \dot f-2t\nabla f\,\cdot\,\nabla \dot f\\
		&=\Big(3\Delta f-2|\nabla f|^2-\frac{N}{2t}\Big)+\Big( 2t\DDelta\Delta f-2t \DDelta |\nabla f|^2\Big)-2\nabla f\,\cdot \nabla W_t+2|\nabla f|^2+2t\nabla f\,\cdot\,\nabla \Delta f.
		\end{align*}
				Notice also that, by Lemma \ref{bgrfdva1} and a locality argument, $W_t\in D_{\mathrm{loc}}(\DDelta)$, with
		\begin{equation}
			\DDelta W_t=2t\DDelta \Delta f-t\DDelta|\nabla f|^2+\Delta f.
		\end{equation}
		Hence,
		\begin{equation}
			(\partial_t-\DDelta) W_t=-2t\DDelta\frac{|\nabla f|^2}{2}+2t\nabla f\,\cdot\,\nabla \Delta f+2\Delta f-2\nabla f\,\cdot\,\nabla W_t-\frac{N}{2t}.
		\end{equation}
		We can then compute
	\begin{align*}
		&\dv{t}\int\varphi W_t\rho_t-\int \Delta\varphi W_t\rho_t=\int \varphi \partial_t( W_t\rho_t)-\int \Delta\varphi W_t\rho_t
=\int \varphi( \partial_t-\DDelta) (W_t\rho_t)
		\\&=\int \varphi(( \partial_t-\DDelta) W_t) \rho_t-2\varphi \nabla W_t\,\cdot\,\nabla \rho_t\\
		&= \int \varphi \Big(-2t\DDelta\frac{|\nabla f|^2}{2}+2t\nabla f\,\cdot\,\nabla \Delta f+2\Delta f-2\nabla f\,\cdot\,\nabla W_t-\frac{N}{2t}\Big)\rho_t+2\int\varphi \nabla W_t\,\cdot\,\nabla f\rho_t\\
		&=\int\varphi \Big(-2t\DDelta\frac{|\nabla f|^2}{2}+2t\nabla f\,\cdot\,\nabla \Delta f+2\Delta f-\frac{N}{2t}\Big)\rho_t\\
		&=-2t\int\varphi \Big(\DDelta\frac{|\nabla f|^2}{2}-\nabla f\,\cdot\,\nabla \Delta f-\frac{\Delta f}{t}+\frac{N}{4t^2}\Big)\rho_t.
	\end{align*}
	Now we conclude by noticing  that by the very definition of $\Ric_N$,
	\begin{equation}\label{cosmetics}
\begin{split}
		&\DDelta\frac{|\nabla f|^2}{2}-\nabla f\,\cdot\,\nabla \Delta f-\frac{\Delta f}{t}+\frac{N}{4t^2}\\
		&\qquad\qquad=\Ric_N(\nabla f,\nabla f)+|\hess f|^2+\frac{(\tr(\hess f)-\Delta f)^2}{N-n}-\frac{\Delta f}{t}+\frac{N}{4t^2}\\
	&\qquad\qquad=\Ric_N(\nabla f,\nabla f)+\Big| \hess f-\frac{1}{2t}g\Big|^2+\frac{\tr(\hess f)}{t}-\frac{n}{4t^2}\\
	&\qquad\qquad\qquad\qquad+\frac{(\tr(\hess f)-\Delta f)^2}{N-n}-\frac{\Delta f}{t}+\frac{N}{4t^2}\\
	&\qquad\qquad=\Ric_N(\nabla f,\nabla f)+\Big| \hess f-\frac{1}{2t}g\Big|^2+\frac{1}{N-n}\Big(\big(\tr(\hess f)-\Delta f\big)+\frac{N-n}{2t}\Big)^2,
\end{split}
	\end{equation}
where $n$ denotes the essential dimension of the space and fractions with denominator $N-n$ have to be understood as $0$ if $N=n$ (which makes sense, as in that case $\tr(\hess f)=\Delta f$).
	\end{proof}

	\begin{proof}[Proof of Corollary \ref{bvrfdca}]
	Fix $\bar x\in\XX$ and take, for $k\ge 1$,  $\varphi_k\in \LIP_{\mathrm {bs}}(\XX)\cap D(\Delta)$ with $\|\nabla\varphi_k\|_{L^\infty}+\|\Delta\varphi_k\|_{L^\infty}\le C(K,N)$ (but independent of $k$) such that $0\le \varphi_k\le 1$ and  $\varphi_k=1$ on $B_k(\bar x)$. The existence of such functions can be proved following the proof of Lemma \ref{goodcutoff} contained in \cite{Mondino-Naber14}, we recall the argument.  Take indeed $\tilde\varphi_k\in\LIP_{\mathrm{bs}}(\XX)$ with $0\le \tilde\varphi_k\le 1$,  $\LIP(\tilde\varphi_k)\le 1$ and $\varphi_k=1$ on $B_{k}(\bar x)$.  As in the proof of \cite{Mondino-Naber14}, see in particular \cite[Equation (3.4) and the one below]{Mondino-Naber14}, $$\|\nabla h_{t_{K,N}}\tilde\varphi_k\|_{L^\infty}+\|\Delta h_{t_{K,N}}\tilde\varphi_k\|_{L^\infty}\le C(t_{K,N}).$$ Hence, we choose $t_{K,N}$ small enough, depending on $K,N$ but not on $k$, and consider $f\circ h_{t_{K,N}}\tilde\varphi_k$, for a suitable function $f\in C^2$. Notice that, in particular, $|\nabla\varphi_k|=\Delta\varphi_k=0\ \mass$-a.e.\ on $B_{k}(\bar x)$.
	
		Take $0<t_1<t_2$. 
		By Theorem \ref{vefadcas},
		\begin{align*}
			\mathcal{W}_{t_2,\varphi_k}-\mathcal{W}_{t_1,\varphi_k}=\int_{t_1}^{t_2} (-2t)	{\bf I}_{k,t}+ \int_{t_2}^{t_1}\int W_t\Delta\varphi_k\rho_t\dd t,
		\end{align*} 
		where we  set, recalling \eqref{cosmetics},
		\begin{align*}\notag
		{\bf I}_{k,t}
		&\defeq\int\varphi_k \bigg(\Ric_N(\nabla f,\nabla f)+\Big| \hess f-\frac{1}{2t}g\Big|^2+\frac{1}{N-n}\Big(\big(\tr(\hess f)-\Delta f\big)+\frac{N-n}{2t}\Big)^2\bigg)\rho_t,
		\\
		&= \int\varphi_k\Big( \DDelta\frac{|\nabla f|^2}{2}-\nabla f\,\cdot\,\nabla \Delta f-\frac{\Delta f}{t}+\frac{N}{4t^2}\Big)\rho_t.
		\end{align*}
		Hence, by \eqref{vefvdsac} and dominated convergence, if we let $k\rightarrow\infty$ in the above,
		\begin{equation}
			\mathcal{W}_{t_2}-\mathcal{W}_{t_1}=\lim_{k\rightarrow\infty} \int_{t_1}^{t_2} (-2t)	{\bf I}_{k,t}\dd t.
		\end{equation}
		Notice that the integrand  in ${\bf I}_{k,t}$ is uniformly  bounded from below by a finite measure, as one can prove that $|\nabla f|^2\rho_t\in L^1$  arguing as for \eqref{vefvdsac} (we are going to prove again this  below Hence, by monotone convergence,
	\begin{equation}\notag
	{\bf I}_{k,t}\rightarrow{\bf I}_{t}\defeq \int \bigg(\Ric_N(\nabla f,\nabla f)+\Big| \hess f-\frac{1}{2t}g\Big|^2+\frac{1}{N-n}\Big(\big(\tr(\hess f)-\Delta f\big)+\frac{N-n}{2t}\Big)^2\bigg)\rho_t.
\end{equation}		


If we prove that $t\mapsto{\bf I}_{t} $  is continuous, we would get the absolute continuity and the differentiability  of $t\mapsto \mathcal{W}_t$. Integrating by parts,
\begin{align*}
	{\bf I}_{k,t}&=\frac{1}{2}\int (\Delta\varphi_k\rho_t +2\nabla\varphi_k\,\cdot\,\nabla\rho_t+\varphi_k\Delta\rho_t)|\nabla f|^2\\
	&\quad\quad+\int \rho_t \nabla\varphi_k\,\cdot\,\nabla f \Delta f+\varphi_k \nabla\rho_t \,\cdot\,\nabla f \Delta f+\varphi_k \rho_t (\Delta f)^2\\
	&\quad\quad +\int \varphi_k  \Big(\frac{N}{4t^2}-\frac{\Delta f}{t}\Big)\rho_t.
\end{align*}
 Fix now $\tau>0$.
Now we bound, for $\mass$-a.e.\ $y\in\XX$, using the Gaussian bounds together with  \eqref{vaawecaweak}, if $t\in (3/4\tau, 4/3\tau)$, and $\dist^2=\dist^2(x,y)$,
\begin{align*}
&( \rho_t+|\nabla\rho_t|+|\Delta\rho_t|)(1+|\nabla f|+|\nabla f|^2+|\nabla f||\Delta f|+|\Delta f|^2+|\Delta f|)\le \frac{C}{\mass(B_{\sqrt{t}}(x))}e^{-\frac{\dist^2}{5t}+\frac{\dist^2}{100t}}\\\
&\quad\quad\le \frac{C}{\mass(B_{\sqrt{t}}(x))} e^{-\frac{\dist^2}{400/57\tau}}=\frac{C}{\mass(B_{\sqrt{t}}(x))} e^{\frac{\dist^2}{400/3\tau}}e^{-\frac{\dist^2}{20/3\tau}}\le C e^{ \frac{\dist^2}{400/3\tau}}\rho_{20/9 \tau}\eqdef G\in L^1,
\end{align*}
where $C=C(K,N,\tau)$, 
and the membership in $G\in L^1$ (notice that $G$ is independent of $t$) follows from \eqref{frcaes}.  Hence, by dominated convergence, we see that 
\begin{equation}\notag
	{\bf I}_{t}=\int \frac{1}{2}\Delta\rho_t |\nabla f|^2+\nabla\rho_t\,\cdot\,\nabla f\Delta f+\rho_t(\Delta f)^2+\frac{N}{4t^2}\rho_t-\frac{\Delta f}{t}\rho_t
\end{equation}
with $|{\bf I}_{t}|\le C\int G$, and that 
\begin{equation}\notag
	|{\bf I}_{t}-{\bf I}_{k,t}|\le C  \int_{\XX\setminus B_k(\bar x)} G\rightarrow 0\quad\text{as $k\rightarrow \infty$, uniformly in $t\in (3/4\tau, 4/3\tau)$}.
\end{equation}
As $t\mapsto {\bf I}_{k,t}$ is continuous continuous for every $k$, the claim follows.	
	\end{proof}

\begin{proof}[Proof of Remark \ref{efdscacsdsa}]Recall from the proof of Corollary \ref{bvrfdca} that $$( \rho_t+|\nabla\rho_t|)(|\nabla f|+|\Delta f|)\in L^1.$$
Therefore, an immediate approximation argument, with the same choice of cut-off functions as in the proof of Corollary \ref{bvrfdca},  yields that 
\begin{equation*}
    \int \Delta f\rho_t=\int |\nabla f|^2\rho_t.\qedhere
\end{equation*}

\end{proof}
	\begin{proof}[Proof of Corollary \ref{corgrfdsa}]
		Recall \eqref{vefvdsac}, so that  
		\begin{equation}\notag
			t\mapsto t \int_\XX\varphi_\delta |\nabla f_t|^2\rho_t
		\end{equation}
		is locally bounded for $t\in [0,\infty)$. Moreover, it is clearly continuous in $t\in (0,\infty)$, by the properties of the heat kernel.
		Hence, local absolute continuity and differentiability follow from Theorem \ref{vefadcas}, together with the expression 
\begin{equation}\label{vefdscads}
\begin{split}
								\partial_t \mathcal{W}_{t}^\delta&=-2t\int_\XX \varphi_\delta\rho_t\dd\Ric(\nabla f_t,\nabla f_t) -2t \int_\XX\varphi_\delta\Big| \hess f_t-\frac{1}{2t}g\Big|^2\rho_t\\
		&\quad\quad+\int_\XX \Delta\varphi_\delta W_t\rho_t-4t(n-1)\delta^2\int_\XX \varphi_\delta|\nabla f_t|^2\rho_t\\
		&\quad\quad-D(n,v)\delta^2 e^{-1/(100 \delta^2 t)}.
\end{split}
\end{equation}
We have to prove that the right hand side of \eqref{vefdscads} is non-positive.
	By the $\RCD(-(n-1)\delta^2,n)$ condition, 
		\begin{equation}\notag
			-2t\int_\XX \varphi_\delta\rho_t\dd\Ric(\nabla f_t,\nabla f_t)-4t(n-1)\delta^2\int_\XX \varphi_\delta|\nabla f_t|^2\rho_t\le 0.
		\end{equation}
		Now, for $x\in B_{\delta^{-1}/2}(p)$ and $t\in (0,\delta^{-2})$, thanks   to \eqref{vaaweca},
		\begin{align*}
			&\Big|	\int_\XX \Delta\varphi_\delta W_t\rho_t\Big|\le C(n)
			\int_{B_{2\delta^{-1}}(p)\setminus B_{\delta^{-1}}(p)} \delta^2 |W_t|\rho_t\\
			&\quad\quad\le C(n,v)\delta^2 e^{-1/(100 \delta^2 t)}\int_{B_{2\delta^{-1}}(p)\setminus B_{\delta^{-1}}(p)} e^{1/(100 \delta^2 t)}e^{ \frac{\dist^2}{10 t}}\rho_t\\
			&\quad\quad\le C(n,v)\delta^2 e^{-1/(100 \delta^2 t)},
		\end{align*}
		we also used \eqref{frcaes} and the observation that for $x\in B_{\delta^{-1}/2}(p)$ and $y\in B_{2\delta^{-1}}(p)\setminus B_{\delta^{-1}}(p) $, $\dist(x,y)\ge \delta^{-1}/2$. To be more precise, the direct application   of \eqref{frcaes} yields a bound which depends also on $\delta$. However, exploiting the estimates of the proof of Theorem \ref{hkest}, in particular, \eqref{sharpgaussian}, \eqref{veacd}, and \eqref{vfdcsxas}, we obtain the conclusion.
		Hence, the proof is concluded choosing $D=D(n,v)$.
	\end{proof}
	
	\subsection{Proof of Theorem \ref{entrointro}}
	The \textbf{monotonicity} statement of Theorem \ref{entrointro} is proved in Corollary \ref{bvrfdca}.
	This section is then devoted to the proof of the \textbf{rigidity} statement of Theorem \ref{entrointro}. We give a simple ad-hoc  proof.

	\bigskip

		 By the usual scaling properties of the heat flow, we can assume that $\partial_t \mathcal{W}_1=0$, to simplify the notation. We set $h_t(x,y)\defeq t f_t(x,y)$.
		As usual we soften the notation as in the proof of Theorem \ref{vefadcas}.
\medskip\\{\textbf{Step 1}}. First, if we plug  the assumption that $\partial_t\mathcal W_1=0$ into the conclusion of  Corollary \ref{bvrfdca}, 
\begin{equation}\notag
	\begin{split}
	0&=\int\rho_1\dd\Ric_N(\nabla f_1,\nabla f_1)+\int\Big| \hess f_1-\frac{1}{2}g\Big|^2\rho_1\dd\mass(y)\\
		&\quad\quad+\frac{1}{N-n}\int\Big(\big(\tr(\hess f_1)-\Delta f_1\big)+\frac{N-n}{2}\Big)^2\rho_1.
	\end{split}
\end{equation}
Therefore,
\begin{equation}\label{vfdsacas}
	\hess h_1=\frac{g}{2}\quad\mass\text{-a.e.}
\end{equation}
and
\begin{equation}\notag
	\tr(\hess h_1)-\Delta h_1+\frac{N-n}{2}=0\quad\mass\text{-a.e.}
\end{equation}
so that 
\begin{equation}\label{cdsacas}
	{\Delta h_1}=\frac{{N}}{2}\quad\text{$\mass$-a.e.}
\end{equation}

As a side remark, we notice that \eqref{cdsacas} is exactly
\begin{equation}\notag
	\Delta \log(\rho_1)=-\frac{N}{2},
\end{equation}
i.e.\ equality in the Li--Yau inequality at time $t=1$. This is enough to conclude, i.e.\  the Li--Yau inequality is rigid  on non-smooth spaces, but we will not exploit this fact. We will use a different method, tailored to this setting and self-contained.

 Notice also that 
		\begin{equation}\notag
	\nabla |\nabla h_1|^2=\nabla h_1\quad\text{$\mass$-a.e.}
	\end{equation}
	which is a consequence of  \eqref{vfdsacas} and the calculus rules.
	\medskip\\{\textbf{Step 2}}.
	Take $\varphi\in \LIP_{\rm bs}(\XX)\cap D(\Delta)$. We want to show that the function (that depends on $\varphi$, but we will not keep this dependence explicit)
		\begin{equation}\notag
		(0,\infty)\ni	t\mapsto \Phi_t\defeq\int \Delta\varphi  h_t
	\end{equation}
	is constant in $t$. 	First, recall that 
	\begin{equation}\notag
		h_t=tf_t=-t\log(\rho_t)-t\frac{N}{2}\log(t)-t\frac{N}{2}\log(4\pi),
	\end{equation}
	so that, recalling Proposition \ref{prop28},
		\begin{equation}\label{vefdcvacdsa}
	\partial_ t h_t= t\partial_t f_t+f_t=-t\frac{\partial_t \rho_t}{\rho_t}-\frac{N}{2}+\frac{h_t}{t}=\Delta h_t+\frac{h_t-{|\nabla h_t|^2}}{t}-\frac{N}{2}.
\end{equation}		
Moreover,
	\begin{equation}\notag
		\Phi_t=\int \Delta\varphi (-t\log(\rho_t)). 
	\end{equation}
By the Gaussian bounds for the heat kernel, for any  compact set $B\subseteq(0,\infty)\times\XX$, there exists $C=C(\XX,B)>0$ such that, for every $k\ge 0$,
\begin{equation}\notag
	\inf_{(t,y)\in B}|\rho_t(x,y)|\ge C^{-1}\quad\text{and}\quad \sup_{(t,y)\in B}|\partial_t^k\rho_t(x,y)|\le C^k.
\end{equation}
By standard properties of analytic functions, these bounds yield the analyticity of $\Phi_t$, locally on $(0,\infty)$. 

Hence, it is enough to show that ${\partial_t^k \Phi_t}_{|t=1}=0$ for every $k\ge 1$. 
This follows from \eqref{vefdcvacdsa} if we prove  by induction that for every $j\ge 0$, (all the derivatives in time are for functions in $L^2_{\mathrm{loc}}$)
\begin{itemize}
	\item [$A_j:$] ${\nabla 	\partial_t^j\Delta h_t}_{|t=1}=0\ \mass$-a.e.
\item [$B_j:$]${\nabla 	\partial_t^j(h_t-|\nabla h_t|^2)}_{|t=1}=0\ \mass$-a.e.
\end{itemize}
Now we notice that $A_0$ and $B_0$ are verified by \textbf{Step 1}. Assume now that we have verified $A_0,A_1,\dots, A_j$ and $B_0,B_1,\dots,B_j$, we want to show $A_{j+1}$ and $B_{j+1}$. Notice first that differentiating in time \eqref{vefdcvacdsa} $j$ times, the combination of $A_j$ and $B_0,B_1,\dots, B_j$ implies 
\begin{itemize}
	\item [$C_{j}:$] ${	\nabla\partial_t^{j+1} h_t}_{|t=1}=0\ \mass${-a.e.}
\end{itemize}
Arguing as for Proposition \ref{prop28}, the regularity properties of the heat flow allow us to show that 
\begin{align}\label{vacdsxa}
	\partial_t \Delta \partial_t^{i-1}h_t&=	\Delta \partial_{t}^{i}h_t\quad\mass\text{-a.e.\ for every $t\in (\tau,\tau^{-1})$ and $i\ge 1$},\\
	\label{vacdsx0a}
	\partial_t\nabla\partial_t^{i-1}  h_t&=\nabla \partial_{t}^{i}h_t\quad\mass\text{-a.e.\ for every $t\in (\tau,\tau^{-1})$ and $i\ge 1$}.
\end{align}
By \eqref{vacdsxa},  $C_{j}$  yields $A_{j+1}$. 
For what concerns $B_{j+1}$, taking in account $C_{j}$, it is enough to show that ${\nabla 	\partial_t^{j+1}(\nabla h_t\,\cdot\, \nabla h_t)}_{|t=1}=0\ \mass$-a.e. This claim follows from $C_0, C_1,\dots, C_j$ and \eqref{vacdsx0a}, by explicit differentiation.
This concludes the proof of the fact that $\Phi_t$ is constant.
\medskip\\{\textbf{Step 3}}.
Recall that by \textbf{Step 2}, $t\mapsto\Phi_t$ is constant for every $\varphi\in \LIP_{\rm bs}(\XX)\cap D(\Delta)$.  Considering $t\Delta f_t=\Delta h_t$ at time $t=1$ (by the computation in \textbf{Step 1}), this implies that
\begin{equation}\notag
	-\Delta h_t=t\Delta\log \rho_t=-\frac{N}{2}\quad\mass\text{-a.e.\ for every $t>0$}.
\end{equation}
Also,   for every $\varphi\in \LIP_{\rm bs}(\XX)\cap D(\Delta)$,  $\partial_t\Phi_t=0$ for every $t>0$. This means that 
\begin{equation}
	\int \Delta \varphi \partial_t h_t=0\quad\text{for every }t>0.
\end{equation}
Recalling \eqref{vefdcvacdsa}, this implies that for every $t>0$, $\Delta |\nabla h_t|^2=\Delta h_t=\frac{N}{2}\ \mass$-a.e.\ for every $t$. By Corollary  \ref{bvrfdca}, we have the constancy of $\mathcal{W}$.

	Take now  $\varphi\in\LIP_{\mathrm{bs}}(\XX)\cap D(\Delta)$. Integrating by parts,
	\begin{equation}\notag
		\int t\log\rho_t  \Delta\varphi=-\frac{N}{2}\int \varphi\quad\text{for every }t>0.
	\end{equation}
	Now, we can conclude as in {\cite{NiEntropy,KuwadaLi}. We use the Gaussian bounds on $\rho_t$ together with the doubling inequality to see that $t\log(\rho_t)\rightarrow - \dist^2(x,\,\cdot\,)/4$ locally uniformly, as $t\searrow 0$, which is a by now classical Varadhan type short time asymptotic (and follows from the Gaussian bounds for  the heat kernel with the doubling inequality). This implies that  
	\begin{equation}\notag
		\int\frac{ \dist^2(x,\,\cdot\,)}{4}\Delta\varphi=\frac{N}{2}\int \varphi.
	\end{equation}
	Being $\varphi$ arbitrary, this means that $\dist^2(x,\,\cdot\,)\in D_{\mathrm{loc}}(\Delta)$, with $\Delta\dist^2(x,\,\cdot\,) =2N\ \mass$-a.e. Now we can compute, for a.e.\ $r>0$,
	\begin{align*}
		2N\mass(B_r(x))&=\int_{B_r(x)}\Delta\dist^2(x,\,\cdot\,)=\int_{\partial B_r(x)}2|\nabla \dist(x,\,\cdot\,)|\dist(x,\,\cdot\,)\dd|\DIFF\chi_{B_r(x)}|=2r|\DIFF\chi_{B_r(x)}|\\&=2r \partial_r \mass(B_r(x)).
	\end{align*} 
	In this derivation we used the theory of calculus for functions of bounded variation on $\RCD$ spaces, see e.g.\ \cite[
	Theorem 2.4]{bru2019rectifiability} and \cite{BGBV}, in particular \cite[Lemma 4.27]{BGBV}, and the fact that $|\DIFF\chi_{B_r(x)}|(\XX)=\partial_r \mass(B_r(x))$ for a.e.\ $r>0$ follows from the coarea formula applied to the distance function. 
	Hence, for a.e.\ $r>0$,
	\begin{equation}\notag
		\partial_r \frac{\mass(B_r(x))}{r^N}=\frac{\partial_r \mass(B_r(x))}{r^N}-N\frac{\mass(B_r(x))}{r^{N+1}}=0,
	\end{equation}
	which implies that $(0,\infty)\ni r\mapsto \frac{\mass(B_r(x))}{r^N}$ is constant (in particular, the space is not compact). This allows us to conclude the proof by an application of the main result of \cite{DPG16}.\qed
%
%
%


\end{document}